\newtheorem{theorem}{Theorem}[section]
\newtheorem{lemma}[theorem]{Lemma}
\newtheorem{remark}[theorem]{Remark}
\newtheorem{assumption}{Assumption}
\newtheorem{definition}{Definition}
\newcommand{\ii}{{\rm i}}
\newcommand{\E}{\mathbb{E}}
\numberwithin{equation}{section}
\newcommand{\m}[1]{\boldsymbol{#1}}
\newcommand{\Q}{\mathcal{Q}}
\newcommand{\lt}{\left}
\newcommand{\rt}{\right}
\newcommand{\Cd}{\beta_d}
\DeclareMathOperator{\Tr}{tr}
\begin{document}

\title[Stability estimates of inverse random source problems]{Stability estimates of inverse random source problems for the wave equations by using correlation-based data}

\author{Peijun Li}
\address{LSEC, ICMSEC, Academy of Mathematics and Systems Science, Chinese Academy of Sciences, Beijing 100190, China, and School of Mathematical Sciences, University of Chinese Academy of Sciences, Beijing 100049, China}
\email{lipeijun@lsec.cc.ac.cn}

\author{Ying Liang}
\address{Department of Mathematics, Duke University, Durham, NC 27708, USA}
\email{ying.liang@duke.edu}

\author{Xu Wang}
\address{LSEC, ICMSEC, Academy of Mathematics and Systems Science, Chinese Academy of Sciences, Beijing 100190, China, and School of Mathematical Sciences, University of Chinese Academy of Sciences, Beijing 100049, China}
\email{wangxu@lsec.cc.ac.cn}

\thanks{The third author is supported by NNSF of China (11971470, 11871068, and 12288201) and by CAS Project for Young Scientists in Basic Research (YSBR-087).}

\subjclass[2010]{35R30, 35R60}

\keywords{Stochastic wave equation, inverse random source problem, Gaussian random fields, far-field patterns, correlation, stability}

\begin{abstract}
This paper focuses on stability estimates of the inverse random source problems for the polyharmonic, electromagnetic, and elastic wave equations. The source is represented as a microlocally isotropic Gaussian random field, which is defined by its covariance operator in the form of a classical pseudo-differential operator. The inverse problem is to  determine the strength function of the principal symbol by exploiting the correlation of far-field patterns associated with the stochastic wave equations at a single frequency. For the first time, we show in a unified framework that the optimal Lipschitz-type stability can be attained across all the considered wave equations through the utilization of correlation-based data.
\end{abstract}

\maketitle

\section{Introduction}
Scattering problems, arising from interactions between waves and media, hold significant importance across a broad spectrum of scientific domains, including medical imaging, exploration geophysics, remote sensing, and nondestructive testing \cite{ABG15, CK19}. The inverse source problem constitutes a crucial research subject in inverse scattering theory, which involves recovering unknown sources from wave field measurements. This problem is challenging due to the presence of non-radiating source, which results in non-uniqueness of solutions, particularly when using solely boundary measurements of wave fields at a single frequency \cite{AM-IP06, bleistein1977nonuniqueness, devaney1982nonuniqueness}. In \cite{BLT2010jde}, the authors initiated a study on employing multi-frequency data to ensure uniqueness and stability in the inverse source problem of the acoustic wave equation. Since then, there has been extensive research into enhancing the stability for the inverse source problem of various wave equations \cite{bao2020stability, cheng2016increasing, EI-SIMA20}. We refer to \cite{BLLT15} for a comprehensive review of theoretical and computational studies on solving inverse scattering problems with multiple frequency data.

The complexity of inverse scattering problems increases substantially with the introduction of random parameters into mathematical models \cite{FGPS-07}. These parameters are incorporated to address unpredictable environmental conditions, incomplete system information, and uncertainties stemming from measurement noise \cite{BPTB-IP02}. The introduction of randomness and uncertainties transforms deterministic inverse problems into demanding stochastic inverse problems. Consequently, statistical properties such as the mean and variance of these random parameters become essential for quantifying uncertainties associated with media or scatterers. The inverse random source problem was initially investigated in \cite{devaney1979jmp}, where several specific instances were examined to determine the auto-correlation of random sources. Further developments include a computational framework introduced in \cite{BCL16}, where random sources are represented as white noise, one of the most frequently adopted uncorrelated stochastic processes. 

To account for more general stochastic processes in characterizing uncertainty of underlying phenomena, a new model is introduced for inverse random source problems, inspired by \cite{LPS08}. The source is modeled as a generalized microlocally Gaussian (GMIG) random field, featuring a covariance operator that takes the form of a classical pseudo-differential operator. The inverse source problem involves determining the strength function of the principal symbol of the covariance operator through the utilization of statistics derived from the random wave field. The uniqueness of the inverse random source problems has been investigated for various wave equations, as exemplified in \cite{LW21}, demonstrating that the strength function is uniquely determined by averaging the amplitude of the random wave field across the frequency band, generated from a single realization of the random source. The issue of uniqueness in inverse random source problems has received considerable attention in previous research. However, studies addressing their stability are relatively rare. In \cite{LL23stability}, a recent study examined the stability of inverse source problems related to the three-dimensional Helmholtz equation, with the source modeled by white noise. Based on the correlation of near-field random wave fields, the H\"{o}lder- and logarithmic-type stability are established for homogeneous and inhomogeneous media, respectively. We reference \cite{HLOS-JMPA18} for the inverse problem concerning the determination of the metric tensor of the stochastic wave equation through the utilization of empirical correlations of the wave field. Related stability estimates regarding the inverse medium scattering problem in a deterministic setting can be found in \cite{hahner2001new, isaev2013new}. 

The objective of this work is to establish a unified framework for assessing the stability of inverse random source problems across the wave equations most commonly encountered. Specifically, we consider the stochastic polyharmonic, electromagnetic, and elastic wave equations with driven sources modeled by GMIG random fields. The inverse random source problem involves determining the strength function of the principal symbol of the covariance operator by utilizing the correlation of the far-field patterns of the random wave fields associated with the underlying wave equations. For the first time, we show that the optimal Lipschitz-type stability can be achieved for all considered the wave equations by employing the correlation-based data at a single frequency, a task that is unattainable for deterministic counterparts. The results demonstrate that random sources act as radiating sources, and the inverse random source problems exhibit stability through the utilization of correlation-based data.

The paper is organized as follows. Section \ref{sec:2} provides some preliminaries to GMIG random fields. Sections \ref{sec:PL} through \ref{sec:EL} focus on the stability estimates of inverse source problems for the stochastic polyharmonic, electromagnetic, and elastic wave equations, respectively. Finally, concluding remarks and perspectives for future research are provided in Section \ref{con}.

\section{Preliminaries}\label{sec:2}

This section presents a brief introduction to $\mathbb{R}^{d}$-valued GMIG random fields, which serve as the model for the random source in the subsequent sections. Without loss of generality, we assume that the random source discussed throughout the paper is a centered random field with zero mean. If this assumption does not hold, the random source can be centered by subtracting its non-zero mean.
 
\subsection{Random scalar field}

We begin with the scalar field case, i.e., $d=1$. Let $D\subset\mathbb{R}^d$ be an open domain with a Lipschitz boundary. The covariance operator $\Q_f$ for a random field $f\in \mathcal D'$ is defined as
\begin{align}\label{eq:Q1}
\langle\Q_f\phi,\psi\rangle:&=\mathbb E[\langle f,\phi\rangle\langle f,\psi\rangle]\notag\\
&=\int_{\mathbb R^d}\int_{\mathbb R^d}\mathbb E[f(y)f(x)]\phi(y)\psi(x)dydx\quad\forall\,\phi,\psi\in\mathcal D,
\end{align}
where $\mathcal D=\mathcal D(D;\mathbb R)$ is the space $C_0^\infty(D)$ equipped with some locally convex topology, known also as the space of test functions, its dual space $\mathcal D'$ is known as the space of distributions, and $\langle\cdot,\cdot\rangle$ denotes the dual product between $\mathcal D'$ and $\mathcal D$. 

\begin{definition}[cf. \cite{LPS08}]\label{def:GMIG}
A generalized Gaussian random field $f$ over $\mathbb{R}^d$ is termed microlocally isotropic of order $-m$ within a domain $D\subset\mathbb{R}^d$ when its covariance operator $\Q_f$ assumes the form of a classical pseudo-differential operator with an isotropic principal symbol $\sigma(x)|\xi|^{-m}$, where $\sigma\in C_0^\infty(D)$ represents the strength, being compactly supported in $D$, and $\sigma\geq 0$. 
\end{definition}

\begin{remark}
The regularity assumption on the strength $\sigma$ in Definition \ref{def:GMIG} can be relaxed to a weaker condition, where  $\sqrt\sigma\in W_0^{s,p}(D)$ for certain $s>0$ and $p>1$, as specified in Assumption \ref{assum:1}.
\end{remark}

For such a GMIG random field $f$ with its covariance operator $\Q_f$ being a pseudo-differential operator, let $c(\cdot,\cdot)$ denote the symbol of $\Q_f$. Then, we have
\begin{align}\label{eq:Q2}
\Q_f\phi(x)=\frac1{(2\pi)^d}\int_{\mathbb R^d}e^{{\rm i}x\cdot\xi}c(x,\xi)\widehat\phi(\xi)d\xi,
\end{align}
where 
\[
\widehat\phi(\xi)=\mathcal F[\phi](\xi)=\int_{\mathbb R^d}\phi(y)e^{-{\rm i}y\cdot\xi}dy
\]
denotes the Fourier transform of $\phi$, and $c(\cdot,\cdot)$ belongs to $\mathcal S^{-m}$, which is defined as
\begin{align*}
\mathcal S^{-m}=\mathcal S^{-m}(\mathbb R^d\times\mathbb R^d):=\left\{c\in C^\infty(\mathbb R^d\times\mathbb R^d)
:|\partial_x^\alpha\partial_\xi^\beta c(x,\xi)|\lesssim(1+|\xi|^2)^{\frac{-m-|\beta|}2}\quad\forall\,\alpha,\beta\in\mathbb Z_+^d\right\}.
\end{align*}

Henceforth, the notation $a\lesssim b$ indicates that $a\le Cb$ for some constant $C>0$, which may vary from line to line but is always independent of the wavenumber $k$ and the strength $\sigma$. 

Combining \eqref{eq:Q1} with \eqref{eq:Q2}, we obtain 
\begin{align*}
\Q_f\phi(x)=\int_{\mathbb R^d}\mathbb E[f(y)f(x)]\phi(y)dy=\int_{\mathbb R^d}\left[\frac1{(2\pi)^d}\int_{\mathbb R^d}e^{{\rm i}(x-y)\cdot\xi}c(x,\xi)d\xi\right]\phi(y)dy,
\end{align*}
and thus we derive the kernel $\mathbb E[f(y)f(x)]$ in the distribution sense as
\begin{align}\label{eq:kernel}
K_f(x,y):=\mathbb E[f(x)f(y)]=\frac1{(2\pi)^d}\int_{\mathbb R^d}e^{{\rm i}(x-y)\cdot\xi}c(x,\xi)d\xi.
\end{align}

\begin{assumption}\label{assum:1}
Consider $f$ as a real-valued centered GMIG random field with order $-m$, which is supported in the unit ball $B_1=\{x\in\mathbb R^d: |x|<1\}$, with $m\in(d+2-4n,d]$ and symbol given by 
\[
c(x,\xi)=\sigma(x)|\xi|^{-m}+r(x,\xi),
\]
where $\sigma\ge0$, $\sqrt\sigma\in W_0^{s,4}(B_1)$ with $s>\frac d4+2n-1$ being a positive integer, $r(\cdot,\xi)\in C_0^\infty(B_1)$, $r\in\mathcal S^{-m-1}$, and $n\in\mathbb N_+$.
\end{assumption}

\begin{remark}\label{rk:f}
Based on Assumption \ref{assum:1}, several observations can be made: 

\begin{itemize}

\item[(i)] The regularity assumption on $\sqrt{\sigma}$ implies that $\sigma\in H_0^s(B_1)$ due to the Leibniz formula
\[
\|\sigma\|_{H_0^s(B_1)}\lesssim\|\sqrt\sigma\|_{W_0^{s,4}(B_1)}^2.
\]
Furthermore, considering the embedding $W_0^{s,4}(B_1)\subset C_0(B_1)$ (cf. \cite[Theorem 7.57]{A75}) for any $s$ that satisfies the condition in Assumption \ref{assum:1}, it follows that $\sqrt{\sigma}\in C_0(B_1)$, and thus $\sigma\in C_0(B_1)$.

\item[(ii)] Since $r\in\mathcal S^{-m-1}$, there exists a constant $C_1>0$ depending only on the residual $r$ and the Lebesgue measure $\lambda(B_1)$ of $B_1$ such that
\begin{align}\label{eq:r}
\int_{B_1}|r(x,\xi)|dx\le C_1 |\xi|^{-m-1},\quad|\xi|>1.
\end{align} 

\end{itemize}
\end{remark}

The regularity of the random source $f$, satisfying Assumption \ref{assum:1}, is uniquely defined by its covariance operator, and thus determined by the order $-m$. As demonstrated in \cite{LW21}, the source $f\in C^{0,\alpha}(B_1)$ with $\alpha\in(0,\frac{m-d}2)$ is relatively regular if $m>d$. However, our primary emphasis lies in the scenario of rough sources where $m\leq d$, indicating that $f$ should be interpreted as a distribution.

\begin{lemma}\label{lem:reg_f}
Let $f$ be a GMIG random field satisfying Assumption \ref{assum:1}. Then $f\in H^{-\frac{d-m}2-\epsilon}(B_1)$ almost surely for any sufficiently small $\epsilon>0$.
\end{lemma}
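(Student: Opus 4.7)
The plan is to establish the stated a.s.\ membership via a Fourier-side second moment computation, exploiting the pseudo-differential structure of the covariance operator $\Q_f$.

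Fix a cutoff $\chi\in C_0^\infty(\mathbb R^d)$ with $\chi\equiv 1$ on $B_1$. Because $f$ is supported in $B_1$, the Fourier transform $\hat f(\xi):=\langle f,\chi\, e^{-\ii\xi\cdot}\rangle$ is defined pathwise as a complex random variable for every $\xi\in\mathbb R^d$. Parseval and Tonelli give
\begin{align*}
\mathbb E\|f\|_{H^s(\mathbb R^d)}^2=(2\pi)^{-d}\int_{\mathbb R^d}(1+|\xi|^2)^s\,T(\xi)\,d\xi,\qquad T(\xi):=\mathbb E|\hat f(\xi)|^2,
\end{align*}
so the whole argument reduces to the pointwise estimate $T(\xi)\lesssim|\xi|^{-m}$ for $|\xi|>1$ (combined with the trivial bound $T(\xi)\lesssim 1$ on $|\xi|\le 1$). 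The outer integral then converges precisely when $2s-m+d<0$, i.e.\ $s<\tfrac{m-d}{2}$. Choosing $s=-\tfrac{d-m}{2}-\epsilon$ forces $\mathbb E\|f\|_{H^s(\mathbb R^d)}^2<\infty$; Markov's inequality yields $\|f\|_{H^s(\mathbb R^d)}<\infty$ almost surely, and the compact support of $f$ delivers the stated regularity in $H^s(B_1)$.

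To derive the key bound $T(\xi)\lesssim|\xi|^{-m}$, decompose $\hat f(\xi)=\langle f,\phi_\xi^c\rangle-\ii\langle f,\phi_\xi^s\rangle$, where $\phi_\xi^{c}(x)=\chi(x)\cos(\xi\cdot x)$ and $\phi_\xi^{s}(x)=\chi(x)\sin(\xi\cdot x)$. Then \eqref{eq:Q1} gives $T(\xi)=\langle \Q_f\phi_\xi^c,\phi_\xi^c\rangle+\langle \Q_f\phi_\xi^s,\phi_\xi^s\rangle$. Substituting \eqref{eq:Q2} together with the identity $\widehat{\chi\cos(\xi\cdot)}(\eta)=\tfrac12[\hat\chi(\eta-\xi)+\hat\chi(\eta+\xi)]$ (analogously for sine), the Schwartz decay of $\hat\chi$ localises the $\eta$-integral near $\pm\xi$, and a routine pseudo-differential manipulation yields
\begin{align*}
T(\xi)=\int_{B_1}c(x,\xi)\,dx+E(\xi),\qquad |E(\xi)|\lesssim|\xi|^{-m-1}.
\end{align*}
Splitting $c(x,\xi)=\sigma(x)|\xi|^{-m}+r(x,\xi)$ and invoking $\sigma\in L^1(B_1)$ from Remark \ref{rk:f}(i) together with the residual bound \eqref{eq:r} then finishes the estimate.

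The hardest part is producing the error bound for $E(\xi)$. One must verify that the frequency shifts $c(x,\xi\pm\omega)-c(x,\xi)$, once integrated against the Schwartz tail $\hat\chi(\omega)$, generate only an $O(|\xi|^{-m-1})$ contribution. This is controlled by combining the symbol class estimates $|\partial_\xi^\beta c(x,\xi)|\lesssim(1+|\xi|^2)^{(-m-|\beta|)/2}$ with the rapid decay of $\hat\chi\in\mathcal S(\mathbb R^d)$; everything else -- radial integration in $\xi$, Fubini, and Markov -- is elementary.
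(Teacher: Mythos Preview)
Your argument is correct and takes a genuinely different route from the paper. The paper does not compute Fourier moments at all; instead it (i) quotes an external regularity result for GMIG fields with \emph{smooth} strength $\tilde\sigma\in C_0^\infty$, giving $\tilde f\in W^{-\frac{d-m}{2}-\frac\epsilon2,\tilde p}$, (ii) observes that the given $f$ has the same law as $\sqrt\sigma\,\tilde f$, and (iii) invokes a pointwise-multiplier lemma from \cite{LPS08} together with Sobolev embeddings to transfer the regularity through the rough factor $\sqrt\sigma\in W_0^{s,4}$. Your approach is more self-contained: it bypasses both the black-box lemma and the multiplication estimate by computing $\mathbb E|\hat f(\xi)|^2$ directly from the pseudo-differential representation \eqref{eq:Q2}, which is essentially the same manipulation the paper later carries out in \eqref{eq:F}. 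One minor correction: the exact leading term is the symmetrized quantity $\tfrac12\int_{B_1}[c(x,\xi)+c(x,-\xi)]\,dx$ rather than $\int_{B_1}c(x,\xi)\,dx$, because the real/imaginary decomposition produces localization near both $\pm\xi$; this is immaterial for the $O(|\xi|^{-m})$ bound. The trade-off is that the paper's argument separates the roughness of $\sigma$ from the microlocal structure and would adapt more easily to other function-space settings, whereas yours is shorter and uses only the $\xi$-derivative symbol bounds (which hold without any $x$-smoothness of $\sigma$), at the cost of the ``routine'' error estimate for $E(\xi)$ being the one step that needs to be written out carefully.
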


\begin{proof}
In \cite[Lemma 2.6]{LW21},  it is shown that if $\tilde f$ is a GMIG random field having a smooth strength $\tilde\sigma\in C_0^{\infty}(B_1)$, then $\tilde f\in W^{-\frac{d-m}2-\frac\epsilon2,\tilde p}(B_1)$ for any sufficiently small $\epsilon>0$ and $\tilde p>1$.

For a random field $f$ satisfying Assumption \ref{assum:1}, its strength $\sigma$ satisfies
\[
\sqrt\sigma\in W_0^{s,4}(B_1)\subset W_0^{\frac{d-m}2+\frac\epsilon2,q'}(B_1)
\]
for any $s>\frac d4+2n-1$ and $q'\in(2,\infty)$. It is noted that $f$ exhibits the same regularity as $\sqrt{\sigma}\tilde f$, which satisfies
\[
\left\|\sqrt{\sigma}\tilde f\right\|_{W^{-\frac{d-m}2-
\frac\epsilon2,p'}(B_1)}\lesssim\left\|\sqrt{\sigma}\right\|_{W^{\frac{d-m}2+\frac\epsilon2,q'}(B_1)}\left\|\tilde f\right\|_{W^{-\frac{d-m}2-\frac\epsilon2,\tilde p}(B_1)}
\]
according to \cite[Lemma 2]{LPS08}, with $q'=\frac{2\tilde p}{\tilde p-1}\in(2,\infty)$ and $p'=\frac{2\tilde p}{\tilde p+1}\in(1,2)$ such that $\frac1{p'}+\frac1{q'}=1$. 

Choosing any $p'\in\left(\frac1{\frac12+\frac\epsilon{2d}},2\right)$, we derive from \cite[Theorem 7.63]{A75} that
\[
H_0^{\frac{d-m}2+\epsilon}(B_1)\subset W_0^{\frac{d-m}2+\frac\epsilon2,q'}(B_1),
\]
and thus
\begin{align*}
f\in W^{-\frac{d-m}2-\frac\epsilon2,p'}(B_1)\subset H^{-\frac{d-m}2-\epsilon}(B_1),
\end{align*}
which completes the proof.
\end{proof}

\subsection{Random vector field}\label{sec:2.2}

We introduce the $\mathbb{R}^d$-valued centered GMIG random vector field $\m{f} = (f_1, f_2, \ldots, f_d)^\top$ of order $-m$, where each component $f_j$, $j = 1, 2, \ldots, d$, constitutes a real-valued GMIG random field of identical order $-m$, defined in the domain $B_1$.  The random vector field $\m{f}$ is uniquely determined by its covariance operator $\mathcal Q_{\m{f}}$, defined as
\[
\langle\mathcal Q_{\m f}\m \phi,\m\psi\rangle=\int_{\mathbb R^d}\int_{\mathbb R^d}\m\phi^\top(y)\E\left[\m{f}(y) \m{f}(x)^\top\right]\m\psi(x)dydx\quad\forall\,\m\phi,\m\psi\in\mathcal D(D;\mathbb R^{d}),
\]
where
\[
\mathcal Q_{\m f}\m \phi(x)=\int_{\mathbb R^d}\mathbb E\left[\m f(x)\m f(y)^\top\right]\m\phi(y)dy=:\int_{\mathbb R^d}K_{\m f}(x,y)\m\phi(y)dy
\]
with the kernel $K_{\m{f}}(x,y) = \E\left[\m{f}(x) \m{f}(y)^\top\right]$ interpreted in the distribution sense.

\begin{assumption}\label{assump2}
Suppose $\boldsymbol{f}$ represents an $\mathbb{R}^{d}$-valued centered GMIG random field with order $-m$ over $B_1\subset{\mathbb{R}^d}$, where $m\in(d-2,d]$. The covariance operator $K_{\boldsymbol f}$ takes the form of a classical pseudo-differential operator, featuring the principal symbol $\boldsymbol{\sigma}(x)|\xi|^{-m}$, defined by  
\begin{eqnarray*}
K_{\m{f}} (x,y) = \frac{1}{(2\pi)^d} \int_{\mathbb{R}^d} e^{\ii(x-y)\cdot \xi}\boldsymbol{c}(x,\xi)d\xi,
\end{eqnarray*}
where the symbol $\boldsymbol{c}$ takes the form
\begin{eqnarray*}
\boldsymbol{c}(x,\xi) = \boldsymbol{\sigma}(x)|\xi|^{-m} + \boldsymbol{r}(x,\xi)
\end{eqnarray*}
with the strength matrix $\boldsymbol{\sigma}=[\sigma_{ij}]_{d\times d}$ being non-negative definite, $\boldsymbol{\sigma}^{\frac12}\in W_0^{s,4}(B_1;\mathbb R^{d\times d})$ for some positive integer $s>\frac d4+1$, $\boldsymbol{r}=[r_{ij}]_{d\times d}\in\mathcal S^{-m-1}(\mathbb{R}^d\times \mathbb{R}^d;\mathbb{R}^{d\times d})$, and $\boldsymbol{r}(\cdot,\xi)\in C_0^\infty (B_1;\mathbb{R}^{d\times d})$.
\end{assumption}

Similar to the scalar case discussed in Remark \ref{rk:f}, for $\m f$ satisfying Assumption \ref{assump2}, we can also deduce that $\boldsymbol\sigma\in H_0^s(B_1;\mathbb R^{d\times d})\cap C_0(B_1;\mathbb R^{d\times d})$, and that there exists a positive constant $C_2$ depending only on the residual $\boldsymbol r$ and $\lambda(B_1)$ such that
\begin{align}\label{eq:B}
\lt\Vert \int_{B_1} \boldsymbol{r}(x, \xi)e^{-\ii \xi \cdot x} dx\rt\Vert_F\le C_2|\xi|^{-m-1},\quad|\xi|>1,
\end{align}
where $\|\cdot\|_F$ denotes the Frobenius norm of a matrix.

\section{Polyharmonic waves}\label{sec:PL}

In the area of engineering and materials science, the polyharmonic wave equation plays an important role for analyzing the vibrational modes and dynamic response of mechanical systems and composite materials, facilitating the design and optimization of structures for various applications, including aerospace and automotive engineering \cite{GGS10}. In particular, the Helmholtz equation and the biharmonic wave equation represent two specific instances of the polyharmonic wave equation. This section examines both the direct and inverse problems for the polyharmonic wave equation. Related inverse boundary value problems concerning the polyharmonic operator are discussed in \cite{KLU-TAMS14}. 

Consider the stochastic polyharmonic wave equation
\begin{align}\label{eq:poly}
(-\Delta)^nu-k^{2n}u=f\quad\text{in} ~ \mathbb R^d,
\end{align}
where $d=2,3$, $n\ge1$ is an integer, $k>0$ is the wavenumber,  and the source $f$ is a GMIG random field satisfying Assumption \ref{assum:1}. The Sommerfeld radiation condition is imposed on the wave field $u$:
\begin{equation}\label{eq:radia}
\lim_{r=|x|\to\infty}r^{\frac{d-1}2}\left(\partial_ru-{\rm i}ku\right)=0.
\end{equation}

When $n=1$, the polyharmonic wave equation \eqref{eq:poly} reduces to the Helmholtz equation 
\begin{align}\label{eq:acoustic}
\Delta u+k^2u=-f\quad\text{in} ~ \mathbb R^d,
\end{align}
which describes acoustic wave phenomena, as studied in \cite{LW21}. When $n=2$, the polyharmonic wave equation transforms into the biharmonic wave equation, as investigated in \cite{LW22}.

It is noteworthy that for any $n\geq1$, only a single radiation condition \eqref{eq:radia} is necessary, similar to the case of the biharmonic equation with $n=2$, as elaborated in \cite{BH20}. This arises from the capability to decompose the wave field $u$ into two components: a propagating wave component, which is the solution to the Helmholtz equation \eqref{eq:acoustic}, and an exponentially decaying wave component, which satisfies the modified Helmholtz equation, where the wavenumber $k$ is replaced by ${\rm i}k$ in \eqref{eq:acoustic}. The latter component does not necessitate the radiation condition.

\subsection{The direct problem}

The well-posedness has been established for the direct problems associated with both the Helmholtz equation \cite{LW21} and the biharmonic wave equation \cite{LW22}. Here, we extend this analysis to investigate the well-posedness of the direct problem concerning the polyharmonic wave equation \eqref{eq:poly}--\eqref{eq:radia} for any $n\geq1$.

It can be verified from the operator splitting that
\begin{equation}\label{os}
\left((-\Delta)^n-k^{2n}\right)^{-1}=\frac1{nk^{2n}}\sum_{j=0}^{n-1}\kappa_j^2(-\Delta-\kappa_j^2)^{-1},
\end{equation}
where $\kappa_j=ke^{{\rm i}\frac{j\pi}n}, j=0,1,\cdots,n-1$. Let $G$ denote the Green's function  corresponding to the polyharmonic wave operator $(-\Delta)^n-k^{2n}$, which satisfies
\[
((-\Delta)^n-k^{2n})G(x,y,k)=-\delta(x-y). 
\] 
Utilizing the operator splitting \eqref{os}, the Green's function $G$ takes the form
\begin{equation}\label{eq:Green1}
G(x,y,k)=\frac1{nk^{2n}}\sum_{j=0}^{n-1}\kappa_j^2\Phi_d(x,y,\kappa_j),
\end{equation}
where $\Phi_d$ is the Green's function of the Helmholtz equation \eqref{eq:acoustic}, explicitly defined as 
\begin{equation*}
\Phi_d(x,y,\kappa_j)=\left\{
\begin{aligned}
&\frac{\rm i}{4}H_0^{(1)}(\kappa_j|x-y|),\quad &d=2,\\
&\frac1{4\pi}\frac{e^{{\rm i}\kappa_j|x-y|}}{|x-y|},\quad &d=3.
\end{aligned}
\right.
\end{equation*}
Here, $H_0^{(1)}$ represents the Hankel function of the first kind with order 0. 

Define an integral operator 
\begin{align*}
(\mathcal H_k\phi)(x):=\int_{\mathbb R^d}G(x,y,k)\phi(y)dy,
\end{align*}
which admits the following decaying properties. The proof is deferred to the appendix to maintain focus on the main results without distraction.
 
\begin{lemma}\label{lm:H}
Suppose $D$ and $B$ are two bounded Lipschitz domains of $\mathbb{R}^d$. The operator $\mathcal H_k:H^{-s_1}(D)\to H^{s_2}(B)$ is bounded, satisfying
\begin{align}\label{eq:H1}
\|\mathcal H_k\|_{\mathcal L(H^{-s_1}(D),H^{s_2}(B))}\lesssim k^{s-2n+1}
\end{align}
for $s:=s_1+s_2\in[0,2n-1)$ with $s_1,s_2\ge0$. Furthermore, this boundedness extends to $\mathcal H_k:H^{-s_1}(D)\to L^\infty(B)$ with
\begin{align}\label{eq:H2}
\|\mathcal H_k\|_{\mathcal L(H^{-s}(D),L^\infty(B))}\lesssim k^{s-2n+1+\frac{d}{2}+\epsilon}
\end{align}
for any $s\in[0,2n-1)$ and $\epsilon>0$.
\end{lemma}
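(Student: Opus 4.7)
The plan is to combine the operator splitting \eqref{os}--\eqref{eq:Green1} with Helmholtz resolvent estimates on bounded domains and a bootstrap via elliptic regularity for the polyharmonic operator. Rewrite
\[
\mathcal H_k \;=\; \frac{1}{n k^{2n}}\sum_{j=0}^{n-1}\kappa_j^{\,2}\,\mathcal H_k^{(j)},\qquad
\mathcal H_k^{(j)}\phi(x):=\int_{\mathbb R^d}\Phi_d(x,y,\kappa_j)\,\phi(y)\,dy,
\]
so that estimating $\mathcal H_k$ reduces to controlling $n$ outgoing Helmholtz resolvents at the wavenumbers $\kappa_j=ke^{\ii j\pi/n}$. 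The term $j=0$ is the real outgoing Helmholtz resolvent, whose sharp limiting-absorption bound $\|\mathcal H_k^{(0)}\|_{H^{-s_1}(D)\to H^{s_2}(B)}\lesssim k^{s-1}$ for $s\in[0,1)$ is established in \cite{LW21,LW22}. For $1\le j\le n-1$, $\mathrm{Im}(\kappa_j)=k\sin(j\pi/n)>0$, so $\Phi_d(\cdot,\cdot,\kappa_j)$ decays exponentially off the diagonal at rate proportional to $k$; a direct Schur-type estimate on the kernel (together with the explicit Hankel asymptotics when $d=2$) furnishes a bound that is at least as strong. Multiplying by the prefactor $|\kappa_j|^2/(nk^{2n})=k^{2-2n}/n$ and summing over $j$ gives \eqref{eq:H1} in the base range $s\in[0,1)$.

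To extend the estimate to $s\in[1,2n-1)$, I would bootstrap using the polyharmonic equation $((-\Delta)^n-k^{2n})u=\phi$ satisfied by $u:=\mathcal H_k\phi$. Interior elliptic regularity for $(-\Delta)^n$ on a slightly enlarged bounded subdomain $B'\supset B$ yields
\[
\|u\|_{H^{s_2+2n}(B)}\lesssim \|(-\Delta)^n u\|_{H^{s_2}(B')}+\|u\|_{L^2(B')}\lesssim \|\phi\|_{H^{s_2}(B')}+k^{2n}\|u\|_{H^{s_2}(B')}+\|u\|_{L^2(B')},
\]
so each gain of $2n$ derivatives costs a single factor of $k^{2n}$ that is precisely absorbed by the $k^{-2n}$ normalization coming from $1/(nk^{2n})$. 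Interpolating with the base case and invoking duality (via the symmetry of $G(x,y,k)$) to transfer derivatives between $s_1$ and $s_2$ fills in the entire range $s\in[0,2n-1)$; the endpoint $s=2n-1$ is unattainable because the Helmholtz singularity at $|\xi|=k$ saturates the smoothing budget. The estimate \eqref{eq:H2} then follows from \eqref{eq:H1} together with the Sobolev embedding $H^{d/2+\epsilon'}(B)\hookrightarrow L^\infty(B)$ whenever $s+d/2+\epsilon'<2n-1$; in the borderline regime close to $s=2n-1$, where this embedding cannot be used as a black box, I would instead argue pointwise via
\[
|\mathcal H_k\phi(x)|\le \|G(x,\cdot,k)\|_{H^s(D)}\,\|\phi\|_{H^{-s}(D)},
\]
and bound $\|G(x,\cdot,k)\|_{H^s(D)}$ uniformly in $x\in B$ by splitting the $y$-integral into near- and far-diagonal contributions and applying the explicit form of $\Phi_d$.

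The main obstacle is carrying the sharp $k$-dependence through the bootstrap without loss: each elliptic-regularity step introduces a factor of $k^{2n}$ from the PDE, which has to be exactly balanced by the $k^{-2n}$ resolvent normalization, so no spurious powers of $k$ accumulate after iteration. This requires careful nested-subdomain arguments and duality pairings; the matter becomes especially delicate in the $L^\infty$ regime near $s=2n-1$, where the naive Sobolev embedding argument fails and the pointwise Green's function estimate must take over.
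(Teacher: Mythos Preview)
Your route is genuinely different from the paper's. The paper never uses the operator splitting \eqref{os} for the estimate: it works entirely on the Fourier side, writing $\langle\mathcal H_k\phi,\psi\rangle$ via Plancherel with the multiplier $(|\xi|^{2n}-k^{2n})^{-1}$, splitting into the safe region $\bigl||\xi|-k\bigr|>k/2$ (handled by size) and the shell $\bigl||\xi|-k\bigr|<k/2$, where the singularity at $|\xi|=k$ is removed by the reflection $\xi\mapsto\xi^*:=(2k/|\xi|-1)\xi$ across the sphere; the principal parts cancel and the difference terms are controlled pointwise through the Haj\l asz characterization of $H^1$ and the $L^2$-boundedness of the Hardy--Littlewood maximal function. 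The $L^\infty$ bound is obtained by the \emph{same} machinery, taking $\psi=\delta(x-\cdot)$ after inserting a weight $(1+|\xi|^2)^{-(d+\epsilon)/4}$, which handles all $s\in[0,2n-1)$ uniformly without a separate borderline argument. Your plan---reduce to the Helmholtz resolvent at $\kappa_0=k$ via the factorization, discard the exponentially decaying $j\ge1$ terms by Schur, and then bootstrap with interior regularity for $(-\Delta)^n$---is modular and reaches the same bound, but it is not self-contained: the $n=1$ input you cite from \cite{LW21,LW22} is itself proved by precisely the Fourier-reflection argument the paper runs here for general $n$. One technical caveat worth flagging: your displayed bootstrap carries $\|\phi\|_{H^{s_2}(B')}$ on the right, which is not dominated by $\|\phi\|_{H^{-s_1}(D)}$ once $s_1,s_2>0$ and $B'$ meets $D$; the iteration therefore only runs cleanly from $s_1=0$, and it is the subsequent duality/interpolation step (which you do mention) that actually produces the full range---the logic is sound, but the ordering of the plan should make this explicit.
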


The following result demonstrates the well-posedness of the direct problem \eqref{eq:poly}--\eqref{eq:radia} in the distributional sense.

\begin{theorem}\label{wp-ph}
The problem \eqref{eq:poly}--\eqref{eq:radia} admits a unique distributional solution
\begin{equation}\label{eq:solu1}
u:=-\mathcal H_kf\in H^{2n-1-\frac{d-m}{2}-\epsilon}_{loc}(\mathbb{R}^d) 
\end{equation}
for any $\epsilon>0$, which satisfies
\[
\langle(-\Delta)^nu,v\rangle-k^{2n}\langle u,v\rangle=\langle f,v\rangle\quad\forall\,v\in \mathcal D.
\]
\end{theorem}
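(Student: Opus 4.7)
The plan is to verify, in turn, that the candidate $u:=-\mathcal{H}_k f$ enjoys the claimed local Sobolev regularity, solves \eqref{eq:poly} in the distributional sense, satisfies the radiation condition \eqref{eq:radia}, and is the unique such solution.

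For the regularity claim, I would invoke Lemma \ref{lem:reg_f} to guarantee that for any sufficiently small $\epsilon'>0$ one has $f\in H^{-\frac{d-m}{2}-\epsilon'}(B_1)$ almost surely. Fixing an arbitrary bounded Lipschitz domain $B\subset\mathbb{R}^d$ and any $0<\epsilon'<\epsilon$, Lemma \ref{lm:H} applied with $D=B_1$, $s_1=\frac{d-m}{2}+\epsilon'$, and $s_2=2n-1-\frac{d-m}{2}-\epsilon$, so that $s_1+s_2<2n-1$, delivers $\mathcal{H}_k f\in H^{s_2}(B)$; since $B$ is arbitrary, $u\in H^{2n-1-\frac{d-m}{2}-\epsilon}_{loc}(\mathbb{R}^d)$ follows.

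To verify the distributional equation, I would test against $v\in\mathcal D$ and exploit the symmetry $G(x,y,k)=G(y,x,k)$ together with the defining identity $((-\Delta)^n_x-k^{2n})G(x,y,k)=-\delta(x-y)$ to write
\[
\langle(-\Delta)^n u-k^{2n}u,v\rangle = -\bigl\langle f,\mathcal H_k[(-\Delta)^n v-k^{2n}v]\bigr\rangle = \langle f,v\rangle,
\]
where the inner pairing is legitimate because $(-\Delta)^n v-k^{2n}v\in C_0^\infty(\mathbb R^d)$. For the radiation condition, substitution of the operator-splitting representation \eqref{eq:Green1} expresses $u$ as a sum of convolutions of $f$ against the Helmholtz fundamental solutions $\Phi_d(\cdot,\cdot,\kappa_j)$; the $j=0$ term is the standard outgoing Helmholtz potential and obeys \eqref{eq:radia}, while for $j=1,\dots,n-1$ one has $\mathrm{Im}(\kappa_j)>0$, so $\Phi_d(\cdot,y,\kappa_j)$ and its radial derivative decay exponentially uniformly in $y\in B_1$, contributing terms of order $o(r^{-(d-1)/2})$ in the Sommerfeld limit.

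Uniqueness reduces to showing that any homogeneous distributional solution $w$ in the same local Sobolev class and satisfying \eqref{eq:radia} must vanish. The natural route is to exploit the factorization $(-\Delta)^n-k^{2n}=\prod_{j=0}^{n-1}(-\Delta-\kappa_j^2)$: applying $\prod_{j\ne 0}(-\Delta-\kappa_j^2)$ to $w$ produces a radiating distributional solution of the Helmholtz equation $(-\Delta-k^2)v=0$ on $\mathbb R^d$, which by Rellich's lemma must vanish, after which the remaining factors, each of the form $-\Delta-\kappa_j^2$ with $\kappa_j^2\notin[0,\infty)$, admit only the trivial solution within our regularity class. I expect this last step to be the main technical obstacle, since the low regularity of $w$ forces the factorization and the Rellich argument to be carried out at the distributional level; this should be manageable via mollification combined with the mapping properties of Lemma \ref{lm:H}, followed by passage to the limit.
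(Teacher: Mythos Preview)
Your plan is correct and matches the paper's approach: regularity via Lemma~\ref{lem:reg_f} followed by Lemma~\ref{lm:H}, then the distributional equation by transferring $(-\Delta)^n-k^{2n}$ onto the test function and invoking the defining property of $G$. You are in fact more thorough than the paper's proof, which does not spell out the radiation condition or uniqueness; your factorization/Rellich sketch is the natural route, and the obstacle you anticipate largely evaporates once you observe that any distributional solution of the homogeneous constant-coefficient elliptic equation $((-\Delta)^n-k^{2n})w=0$ is $C^\infty$, so the subsequent steps can be carried out classically.
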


\begin{proof}
It follows from Lemma \ref{lem:reg_f} that $f\in H^{-\frac{d-m}2-\epsilon}(B_1)$. By Lemma \ref{lm:H}, it holds that
\[
u=-\mathcal H_kf\in H^{2n-1+\frac{m-d}{2}-\epsilon}_{loc}(\mathbb{R}^d)
\] 
for any $\epsilon>0$, where $2n-1+\frac{m-d}{2}>0$ for $m\in(d+2-4n,d]$. For any $v\in C_0^\infty(\mathbb{R}^d)$, we have from the integration by parts that 
\begin{align*}
\langle (-\Delta)^nu-k^{2n}u,v\rangle&=\langle u,(-\Delta)^nv\rangle-k^{2n}\langle u,v\rangle\\
&=-\int_{\mathbb R^d}\langle G(\cdot,y,k),((-\Delta)^n-k^{2n})v\rangle f(y)dy\\
&=\langle f,v\rangle,
\end{align*}
which indicates that $u=-\mathcal H_kf$ is a distributional solution of \eqref{eq:poly}--\eqref{eq:radia}.
\end{proof}

As demanded in Assumption \ref{assum:1}, the well-posedness in Theorem \ref{wp-ph} is ensured under the condition $m\in(d+2-4n,d]$, which aligns with the requirement stated in \cite[Theorem 3.2]{LW21} for $n=1$ and the one specified in \cite[Assumption 2.3]{LW22} for $n=2$.

Utilizing \eqref{eq:Green1}, the Green's function $G$ has the asymptotic behavior
\begin{equation}\label{Ga}
G(x,y,k)= \frac{e^{\ii k |x|}}{|x|^{\frac{d-1}{2}}} \left( \frac{\Cd}{n} k^{\frac{d+1-4n}{2}}e^{-\ii k \hat{x}\cdot y}+ O(|x|^{-1})\right),\quad|x|\to \infty,
\end{equation}
where $\hat{x} := \frac{x}{|x|} \in \mathbb{S}^{d-1}$ and
\begin{equation}\label{eq:beta}
\Cd=\left\{
\begin{aligned}
&\frac{e^{\ii \frac{\pi}{4}}}{\sqrt{8\pi}},\quad& d=2,\\
&\frac{1}{4\pi},\quad& d=3.
\end{aligned}
\right.
\end{equation}
Combining \eqref{eq:solu1} and \eqref{Ga} yields 
\[
u(x,k) = \frac{e^{\ii k |x|}}{|x|^{\frac{d-1}{2}}} \left(u^\infty(\hat{x}) +O(|x|^{-1})\right), 
\]
where the far-field pattern is 
\begin{equation}\label{eq:farfield}
u^\infty(\hat{x},k) = -\frac{\Cd}{n} k^{\frac{d+1-4n}{2}}\int_{\mathbb{R}^d}e^{\ii k\hat{x}\cdot y}f(y)dy,\quad \hat x\in \mathbb S^{d-1}.
\end{equation}

\begin{theorem}\label{tm:u_infty}
The far-field pattern $u^\infty$ defined in \eqref{eq:farfield} is bounded almost surely.
\end{theorem}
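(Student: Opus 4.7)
The argument is pathwise and straightforward once $f$ is assigned the appropriate Sobolev regularity. Since $f$ is a compactly supported distribution in $B_1$, the integral in \eqref{eq:farfield} has to be interpreted as the distributional pairing
\[
\int_{\mathbb{R}^d} e^{\ii k\hat{x}\cdot y} f(y)\, dy = \langle f,\, \chi\, e^{\ii k\hat{x}\cdot \cdot}\rangle,
\]
where $\chi \in C_0^\infty(\mathbb{R}^d)$ is any cutoff equal to $1$ on a neighborhood of $\overline{B_1}$; the value is independent of $\chi$ by the support hypothesis on $f$. My plan is to bound this pairing pathwise using the Sobolev regularity already established in Lemma \ref{lem:reg_f}.

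Choose $s = \tfrac{d-m}{2} + \epsilon$ with $\epsilon>0$ small. The assumption $m \in (d+2-4n, d]$ guarantees that $s \in (0, 2n-1)$, and Lemma \ref{lem:reg_f} yields $f \in H^{-s}(B_1)$ almost surely. By $H^{-s}$--$H^s$ duality,
\[
\bigl|\langle f, \chi\, e^{\ii k\hat{x}\cdot\cdot}\rangle\bigr| \le \|f\|_{H^{-s}(B_1)}\, \|\chi\, e^{\ii k\hat{x}\cdot\cdot}\|_{H^s(\mathbb{R}^d)}.
\]
A direct computation via the Leibniz rule, using $|\hat x|=1$ so that each derivative of $e^{\ii k\hat x\cdot y}$ contributes a factor of order $k$, shows $\|\chi\, e^{\ii k\hat{x}\cdot\cdot}\|_{H^s(\mathbb{R}^d)} \lesssim (1+k)^s$ uniformly in $\hat{x} \in \mathbb{S}^{d-1}$.

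Multiplying by the prefactor $k^{(d+1-4n)/2}$ in \eqref{eq:farfield} gives
\[
\sup_{\hat{x}\in\mathbb{S}^{d-1}}|u^\infty(\hat{x},k)| \lesssim k^{\frac{d+1-4n}{2}+s}\, \|f\|_{H^{-s}(B_1)},
\]
which is almost surely finite for every fixed $k$. The only genuinely delicate point is giving rigorous meaning to the formal integral in \eqref{eq:farfield}, because the condition $m \le d$ forces $f$ to be a true distribution rather than a function; once this is handled by the cutoff--duality argument (equivalently, via Paley--Wiener--Schwartz), the theorem is an immediate consequence of Lemma \ref{lem:reg_f}, with no additional probabilistic input.
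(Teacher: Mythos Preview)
Your proof is correct and follows essentially the same route as the paper's: bound the pairing by $H^{-s}$--$H^s$ duality with $s=\tfrac{d-m}{2}+\epsilon$, control the $H^s$ norm of the exponential uniformly in $\hat x$, and invoke Lemma \ref{lem:reg_f} for the almost-sure finiteness of $\|f\|_{H^{-s}(B_1)}$. The only cosmetic differences are that you make the cutoff $\chi$ and the distributional interpretation explicit (the paper simply writes $\|p(k,\hat x,\cdot)\|_{H^{(d-m)/2+\epsilon}(B_1)}$) and that you track the $k$-dependence of the bound, which the paper omits.
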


\begin{proof}
For each $\hat{x}\in\mathbb S^{d-1}$, we have from \eqref{eq:farfield} that
\begin{align*}
 |u^\infty(\hat{x},k)|&=\left|\frac{\Cd}n k^{\frac{d+1-4n}{2}}\int_{\mathbb{R}^d} e^{-\ii k\hat{x}\cdot y}f(y)dy\right|\\
 &\lesssim \Cd k^{\frac{d+1-4n}{2}} \Vert p(k,\hat{x},\cdot)\Vert_{H^{\frac{d-m}{2}+\epsilon}(B_1)} \Vert f \Vert_{H^{-\frac{d-m}{2}-\epsilon}(B_1)},
\end{align*}
where  $p(k,\hat{x},y) := e^{-\ii k\hat{x}\cdot y}$. By the regularity of the random source presented in Lemma \ref{lem:reg_f}, we conclude that  $\|u^\infty(\cdot,k)\|_{L^\infty(\mathbb S^{d-1})}$ is bounded almost surely.
\end{proof}

The far-field pattern $u^\infty$ of the wave field $u$ serves as data to determine the strength $\sigma$ associated with the covariance operator of the random source $f$.

\subsection{The inverse problem}

Given $\hat{x},\hat{y}\in\mathbb S^{d-1}$, define the correlation function of the far-field pattern as
\begin{align*}
F_{\rm PL}(\hat{x},\hat{y},k):=\E[u^\infty(\hat{x},k) u^\infty(\hat{y},k)].
\end{align*}
By Theorem \ref{tm:u_infty}, we may denote 
\begin{equation*}
 M_{\rm PL}(k) :=\Vert F_{\rm PL}(\cdot,\cdot,k)\Vert_{L^\infty(\mathbb S^{d-1}\times \mathbb S^{d-1})}.
 \end{equation*}
 
To derive the stability estimate, we begin with the estimates of $M_{\rm PL}(k)$ and the low-frequency component of the Fourier coefficient of the strength function $\widehat \sigma$. 

\begin{lemma}\label{lm:M_PL}
For any wavenumber $k>1$, the quantity $M_{\rm PL}(k)$ is well-defined and satisfies the estimates
\begin{align*}
\frac{\Cd^2}{n^2}\left(\|\sigma\|_{L^1(B_1)}-C_1 k^{-1}\right) k^{d+1-4n-m}\le M_{\rm PL}(k)\le \frac{\Cd^2}{n^2}\left(\|\sigma\|_{L^1(B)}+C_1 k^{-1}\right) k^{d+1-4n-m},
\end{align*}
where $C_1$ is given in \eqref{eq:r}.
\end{lemma}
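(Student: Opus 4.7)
The plan is to exploit the fact that $u^\infty(\hat x,k)$ is, up to known prefactors, a Fourier-type transform of the source $f$ evaluated at a vector of modulus $k$, so that the two-point correlation $F_{\rm PL}$ collapses to a single oscillatory integral against the symbol $c(\cdot,k\hat y)$ on the spatial side.

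Concretely, I would insert \eqref{eq:farfield} into the definition of $F_{\rm PL}$, pull the expectation inside the double spatial integral, and invoke the symbol representation \eqref{eq:kernel} of the kernel $K_f$. Integrating in $z$ first produces the factor $(2\pi)^d\delta(\xi-k\hat y)$, which collapses the $\xi$-integral and yields
\begin{align*}
F_{\rm PL}(\hat x,\hat y,k) = \frac{\Cd^2}{n^2}\,k^{d+1-4n}\int_{B_1} e^{\ii k(\hat x+\hat y)\cdot y}\,c(y,k\hat y)\,dy.
\end{align*}
Splitting $c(y,k\hat y) = k^{-m}\sigma(y) + r(y,k\hat y)$ using $|\hat y|=1$, the $\sigma$-piece is bounded in modulus by $k^{-m}\|\sigma\|_{L^1(B_1)}$ uniformly in $\hat x,\hat y$, while the residual piece is controlled by $C_1 k^{-m-1}$ via \eqref{eq:r}. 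Multiplying through by the prefactor $\frac{\Cd^2}{n^2}k^{d+1-4n}$ and taking the supremum in $\hat x,\hat y\in\mathbb S^{d-1}$ gives the upper bound on $M_{\rm PL}(k)$.

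For the lower bound, I would specialize to $\hat y=-\hat x$, which annihilates the oscillatory phase. Since $\sigma\ge 0$, the $\sigma$-piece reduces exactly to $\frac{\Cd^2}{n^2}k^{d+1-4n-m}\|\sigma\|_{L^1(B_1)}$, while the residual piece is still dominated by $\frac{\Cd^2}{n^2}C_1\,k^{d+1-4n-m-1}$ through \eqref{eq:r}. A reverse triangle inequality then yields the stated lower bound on $|F_{\rm PL}(\hat x,-\hat x,k)|$, and the trivial inequality $M_{\rm PL}(k)\ge |F_{\rm PL}(\hat x,-\hat x,k)|$ closes the sandwich.

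The only delicate step is the rigorous justification of the delta-function reduction: since $c\in\mathcal S^{-m}$ with $m\le d$, the Fourier integral defining $K_f$ is not absolutely convergent, so the collapse of the double spatial integral should be understood via the distributional pairing of $\Q_f$ against the compactly cut-off exponentials $e^{\ii k\hat y\cdot z}\chi(z)$ with $\chi\in C_c^\infty$ equal to one on $B_1$, or equivalently via integration by parts in $\xi$ that exploits the symbol decay encoded in $r\in\mathcal S^{-m-1}$. Once this manipulation is justified, the remaining estimates are elementary size bounds.
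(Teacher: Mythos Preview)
Your proposal is correct and follows essentially the same route as the paper: insert \eqref{eq:farfield} into $F_{\rm PL}$, use the kernel representation \eqref{eq:kernel} to collapse the $\xi$-integral via a delta function so that the symbol is evaluated at a point of modulus $k$, split into the $\sigma$-piece and the $r$-piece controlled by \eqref{eq:r}, and specialize to $\hat y=-\hat x$ for the lower bound. The only cosmetic discrepancy is a sign convention---the paper's proof lands on $c(z_1,-k\hat y)$ and $e^{-\ii k(\hat x+\hat y)\cdot z_1}$ rather than your $c(y,k\hat y)$ and $e^{\ii k(\hat x+\hat y)\cdot y}$---which is immaterial to all the size estimates.
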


\begin{proof}
For any $\hat x,\hat y\in\mathbb S^{d-1}$ and $k>1$, we obtain from \eqref{eq:kernel} that 
\begin{align}\label{eq:F}
F_{\rm PL}(\hat x,\hat y,k)
&=\frac{\Cd^2}{n^2} k^{d+1-4n} \int_{\mathbb{R}^d} \int_{\mathbb{R}^d} e^{-\ii k\hat{x}\cdot z_1} e^{-\ii k\hat{y}\cdot z_2}K_f(z_1,z_2) dz_1 dz_2\notag\\
&=\frac{\Cd^2}{n^2} k^{d+1-4n}\int_{\mathbb{R}^d} \int_{\mathbb{R}^d} e^{-\ii k\hat{x}\cdot z_1} e^{-\ii k\hat{y}\cdot z_2} \left[\frac{1}{(2\pi)^d} \int_{\mathbb{R}^d} e^{\ii (z_1-z_2)\cdot \xi}c(z_1, \xi)d\xi\right] dz_1 dz_2 \notag\\
&=\frac{\Cd^2}{n^2(2\pi)^d} k^{d+1-4n}\int_{\mathbb{R}^d} \int_{\mathbb{R}^d} \left[\int_{\mathbb{R}^d}  e^{-\ii (k\hat{y}+\xi)\cdot z_2} dz_2\right]c(z_1,\xi)e^{-\ii (k\hat{x}-\xi)\cdot z_1} d\xi dz_1\notag\\
&=\frac{\Cd^2}{n^2} k^{d+1-4n}\int_{\mathbb{R}^d} \int_{\mathbb{R}^d}\delta(\xi+k\hat{y})\left(\sigma(z_1)|\xi|^{-m}+r(z_1, \xi)\right)e^{-\ii (k\hat{x}-\xi)\cdot z_1} d\xi dz_1\notag\\
&=\frac{\Cd^2}{n^2} k^{d+1-4n}\left[\int_{\mathbb{R}^d} e^{-\ii k(\hat{x}+\hat{y})\cdot z_1} \sigma(z_1)k^{-m} dz_1+\int_{\mathbb{R}^d} r(z_1, -k \hat{y})e^{-\ii k(\hat{x}+\hat{y})\cdot z_1} dz_1\right],
\end{align}
where $r(\cdot,\xi)\in C_0^\infty(B_1)$ for any $\xi\in\mathbb R^d$, and $r\in\mathcal S^{-m-1}$ satisfying
\begin{align*}
\int_{B_1}|r(z_1,-k\hat y)|dz_1\le C_1 k^{-m-1}
\end{align*}
as stated in \eqref{eq:r}.

By Remark \ref{rk:f}, we have $\sigma\in C_0(B_1)$, thereby implying 
\begin{align*}
M_{\rm PL}(k)&=\sup_{\hat x,\hat y\in\mathbb S^{d-1}}\left|F_{\rm PL}(\hat x,\hat y,k)\right|\\
&=\frac{\Cd^2}{n^2} k^{d+1-4n}\sup_{\hat x,\hat y\in\mathbb S^{d-1}}\left|k^{-m}\int_{B_1} e^{-\ii k(\hat{x}+\hat{y})\cdot z_1} \sigma(z_1) dz_1+\int_{B_1} r(z_1, -k \hat{y})e^{-\ii k(\hat{x}+\hat{y})\cdot z_1} dz_1\right|\\
&\le\frac{\Cd^2}{n^2}k^{d+1-4n-m}\left(\|\sigma\|_{L^1(B_1)}+C_1 k^{-1}\right).
\end{align*}
On the other hand, choosing $\hat y=-\hat x$ in the above supreme norm yields 
\begin{align*}
M_{\rm PL}(k)&=\frac{\Cd^2}{n^2} k^{d+1-4n}\sup_{\hat x,\hat y\in\mathbb S^{d-1}}\left|k^{-m}\int_{B_1} e^{-\ii k(\hat{x}+\hat{y})\cdot z_1} \sigma(z_1) dz_1+\int_{B_1} r(z_1, -k \hat{y})e^{-\ii k(\hat{x}+\hat{y})\cdot z_1} dz_1\right|\\
&\ge\frac{\Cd^2}{n^2} k^{d+1-4n}\left|k^{-m}\int_{B_1}\sigma(z_1) dz_1+\int_{B_1} r(z_1, k \hat{x})dz_1\right|\\
&\ge\frac{\Cd^2}{n^2} k^{d+1-4n}\left[k^{-m}\int_{B_1}\sigma(z_1) dz_1-\int_{B_1}|r(z_1,k\hat x)|dz_1\right]\\
&\ge\frac{\Cd^2}{n^2} k^{d+1-4n-m}\left(\|\sigma\|_{L^1(B_1)}-C_1 k^{-1}\right),
\end{align*}
which completes the proof.
\end{proof}
\begin{lemma}\label{lm:Fourier_PL}
For any $k>1$ and $\gamma \in \mathbb{R}^d$ with $|\gamma|\leq 2k$, the Fourier coefficient $\widehat\sigma(\gamma)$ satisfies
\[
|\widehat{\sigma}(\gamma)|\le \frac{n^2}{\Cd^2} k^{m+4n-d-1}M_{\rm PL}(k)+C_1 k^{-1}. 
\]
\end{lemma}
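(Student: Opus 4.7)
The plan is to reuse the explicit formula for the correlation function $F_{\rm PL}(\hat x,\hat y,k)$ derived in the proof of Lemma \ref{lm:M_PL}, namely
\[
F_{\rm PL}(\hat x,\hat y,k) = \frac{\Cd^2}{n^2} k^{d+1-4n-m}\int_{B_1} e^{-\ii k(\hat{x}+\hat{y})\cdot z}\sigma(z)\,dz +\frac{\Cd^2}{n^2} k^{d+1-4n}\int_{B_1} r(z,-k\hat{y})e^{-\ii k(\hat{x}+\hat{y})\cdot z}\,dz,
\]
and recognize the first integral as the Fourier coefficient $\widehat\sigma(k(\hat x+\hat y))$. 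The key geometric observation is that the map $(\hat x,\hat y)\mapsto k(\hat x+\hat y)$ from $\mathbb S^{d-1}\times \mathbb S^{d-1}$ covers the closed ball $\{\gamma\in\mathbb R^d:|\gamma|\le 2k\}$; this allows every admissible low-frequency point $\gamma$ to be realized as $k(\hat x+\hat y)$.

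More concretely, given $\gamma\in\mathbb R^d$ with $|\gamma|\le 2k$, I would set $\eta:=\gamma/(2k)$, so $|\eta|\le 1$, and choose any $\hat x,\hat y\in\mathbb S^{d-1}$ of the form $\hat x=\eta+\eta^\perp\sqrt{1-|\eta|^2}$ and $\hat y=\eta-\eta^\perp\sqrt{1-|\eta|^2}$ for some unit vector $\eta^\perp$ orthogonal to $\eta$ (when $|\eta|=1$ one just takes $\hat x=\hat y=\eta$). Then $k(\hat x+\hat y)=\gamma$, and the identity above becomes
\[
F_{\rm PL}(\hat x,\hat y,k)=\frac{\Cd^2}{n^2}k^{d+1-4n-m}\widehat\sigma(\gamma)+\frac{\Cd^2}{n^2}k^{d+1-4n}\int_{B_1} r(z,-k\hat{y})e^{-\ii\gamma\cdot z}\,dz.
\]

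The residual term is controlled directly by the symbol estimate \eqref{eq:r}: since $k>1$,
\[
\left|\int_{B_1} r(z,-k\hat{y})e^{-\ii\gamma\cdot z}\,dz\right|\le \int_{B_1}|r(z,-k\hat y)|\,dz\le C_1 k^{-m-1}.
\]
Solving the displayed identity for $\widehat\sigma(\gamma)$, taking absolute values, bounding $|F_{\rm PL}(\hat x,\hat y,k)|\le M_{\rm PL}(k)$ by the definition of $M_{\rm PL}$, and multiplying through by $\frac{n^2}{\Cd^2}k^{m+4n-d-1}$ yields
\[
|\widehat\sigma(\gamma)|\le \frac{n^2}{\Cd^2}k^{m+4n-d-1}M_{\rm PL}(k)+C_1 k^{-1},
\]
which is the claim. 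There is really no obstacle here: the result is essentially a repackaging of the computation already performed for Lemma \ref{lm:M_PL}, with the supremum over $\hat x,\hat y$ replaced by a specific choice realizing an arbitrary Fourier frequency $\gamma$ with $|\gamma|\le 2k$.
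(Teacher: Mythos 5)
Your proposal is correct and follows essentially the same route as the paper: realize $\gamma=k(\hat x+\hat y)$ via the explicit choice $\hat x,\hat y=\frac{\gamma\pm\sqrt{4k^2-|\gamma|^2}\,d_1}{2k}$ with $d_1\perp\gamma$ (your $\eta$, $\eta^\perp$ parametrization is the same construction), substitute into the formula for $F_{\rm PL}$ from the proof of Lemma \ref{lm:M_PL}, bound the residual integral by $C_1k^{-m-1}$ using \eqref{eq:r}, and bound $|F_{\rm PL}|$ by $M_{\rm PL}(k)$. No gaps.
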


\begin{proof}
For any fixed $\gamma \in \mathbb{R}^d$ with $|\gamma|\leq 2k$, there exists a unit vector $d_1$ such that $d_1\cdot \gamma=0$. It can be verified that unit vectors $\hat{x} = \frac{\gamma + \sqrt{4k^2-|\gamma|^2}d_1}{2k}\in\mathbb S^{d-1}$ and $\hat{y} = \frac{\gamma - \sqrt{4k^2-|\gamma|^2}d_1}{2k}\in\mathbb S^{d-1}$ satisfy $\gamma=k(\hat{x}+\hat{y} )$. Substituting $\hat x$ and $\hat y$ into \eqref{eq:F} leads to 
\begin{align*}
F_{\rm PL}(\hat x,\hat y,k)
&=\frac{\Cd^2}{n^2} k^{d+1-4n}\left[\int_{\mathbb{R}^d} e^{-\ii k(\hat{x}+\hat{y})\cdot z_1} \sigma(z_1)k^{-m} dz_1+\int_{\mathbb{R}^d} r(z_1, -k \hat{y})e^{-\ii k(\hat{x}+\hat{y})\cdot z_1} dz_1\right]\\
&=\frac{\Cd^2}{n^2} k^{d+1-4n}\left[k^{-m}\widehat{\sigma}( \gamma)+\int_{B_1} r(z_1, -k\hat{y})e^{-\ii \gamma\cdot z_1} dz_1\right].
\end{align*}
Noting $r(\cdot,\xi)\in C_0^\infty(B_1)$ for any $\xi\in\mathbb R^d$ and $r\in\mathcal S^{-m-1}$, we have from \eqref{eq:r} that 
\begin{equation*}
\left|\int_{B_1} r(z_1, -k\hat{y})e^{-\ii \gamma \cdot z_1} dz_1\right|\le\int_{B_1} |r(z_1, -k\hat{y})| dz_1\le C_1 k^{-m-1}.
\end{equation*}
By combining the above estimates, it follows for any $\gamma\in\mathbb{R}^d$ with $|\gamma|\leq 2k$ that
\begin{align*}
|\widehat{\sigma}(\gamma)|&\le \frac{n^2}{\Cd^2} k^{m+4n-d-1}\left|F_{\rm PL}(\hat x,\hat y,k)\right|+C_1 k^{-1}\\
&\le \frac{n^2}{\Cd^2} k^{m+4n-d-1}M_{\rm PL}(k)+C_1 k^{-1},
\end{align*}
which completes the proof.
\end{proof}

The following result demonstrates the Lipschitz stability of the inverse random source problem by utilizing the correlation of the far-field patterns associated with the wave field of the polyharmonic wave equation.

\begin{theorem}\label{stab_poly}
Let $f$ satisfy Assumption \ref{assum:1} with the additional condition $s>\max\{\frac d4+2n-1,d\}$. Then there exists a constant $k_0>1$ such that for any $k > k_0$, the following estimate holds:
\begin{eqnarray*}
\Vert \sigma\Vert_{L^\infty(B_1)} \lesssim k^{\frac ds+m+4n-d-1}\left(1+\|\sigma\|_{H^s(B_1)}\right)M_{\rm PL}(k).
\end{eqnarray*}
\end{theorem}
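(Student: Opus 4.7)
The plan is to recover pointwise control of $\sigma$ from control of its Fourier transform $\widehat\sigma$: Lemma \ref{lm:Fourier_PL} provides a pointwise bound on $\widehat\sigma(\gamma)$ for $|\gamma|\le 2k$ in terms of $M_{\rm PL}(k)$, whereas the a priori Sobolev regularity $\sigma\in H_0^s(B_1)$ furnished by Assumption \ref{assum:1} together with Remark \ref{rk:f} controls the high-frequency tail of $\widehat\sigma$. Combining the two through a Fourier-inversion split will deliver the claim.

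Since $s>d/2$, a weighted Cauchy--Schwarz argument gives $\widehat\sigma\in L^1(\mathbb{R}^d)$, so $\sigma$ admits an absolutely convergent Fourier representation and
\begin{equation*}
\|\sigma\|_{L^\infty(B_1)}\le\frac{1}{(2\pi)^d}\int_{\mathbb R^d}|\widehat\sigma(\gamma)|\,d\gamma.
\end{equation*}
First I would pick a cutoff $\rho\in[1,2k]$ and split the integral. On the low-frequency block $\{|\gamma|\le\rho\}$, Lemma \ref{lm:Fourier_PL} yields
\begin{equation*}
\int_{|\gamma|\le\rho}|\widehat\sigma(\gamma)|\,d\gamma\lesssim\rho^d\bigl(k^{m+4n-d-1}M_{\rm PL}(k)+k^{-1}\bigr),
\end{equation*}
while on the tail $\{|\gamma|>\rho\}$, Cauchy--Schwarz against the weight $(1+|\gamma|^2)^{-s/2}$ gives
\begin{equation*}
\int_{|\gamma|>\rho}|\widehat\sigma(\gamma)|\,d\gamma\le\left(\int_{|\gamma|>\rho}(1+|\gamma|^2)^{-s}\,d\gamma\right)^{1/2}\|\sigma\|_{H^s(B_1)}\lesssim\rho^{d/2-s}\|\sigma\|_{H^s(B_1)},
\end{equation*}
which is finite thanks to $s>d/2$.

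Next, I would choose $\rho=k^{1/s}$, which is admissible since $k^{1/s}\le 2k$ for $k\ge1$ and $s\ge1$. This converts the two preceding estimates into
\begin{equation*}
\|\sigma\|_{L^\infty(B_1)}\lesssim k^{d/s+m+4n-d-1}M_{\rm PL}(k)+k^{d/s-1}+k^{d/(2s)-1}\|\sigma\|_{H^s(B_1)}.
\end{equation*}
The theorem would then follow by absorbing the last two residuals into $k^{d/s+m+4n-d-1}(1+\|\sigma\|_{H^s(B_1)})M_{\rm PL}(k)$. For this, I would invoke the lower bound from Lemma \ref{lm:M_PL}, namely $M_{\rm PL}(k)\gtrsim\|\sigma\|_{L^1(B_1)}k^{d+1-4n-m}$, once $k$ exceeds a threshold $k_0$ depending on $\sigma$ (the trivial case $\sigma\equiv 0$ being handled separately).

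The main obstacle lies in the calibration of the cutoff $\rho$: it must be large enough for the Sobolev tail $\rho^{d/2-s}\|\sigma\|_{H^s}$ to be negligible, yet no larger than $2k$ so that the pointwise estimate of Lemma \ref{lm:Fourier_PL} applies across the entire low-frequency block. The $k^{d/s}$ factor in the theorem is precisely the signature of the balancing choice $\rho=k^{1/s}$, and the strengthening $s>d$ (beyond the $s>d/2$ needed just for convergence of the Sobolev tail) is what makes the residual power $k^{d/s-1}$ decay in $k$, so that it can be absorbed by the lower bound on $M_{\rm PL}(k)$ for $k$ past $k_0$.
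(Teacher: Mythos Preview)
Your proposal is correct and matches the paper's proof: the same Fourier split at $\rho=k^{1/s}$, Lemma~\ref{lm:Fourier_PL} for the low-frequency block, a Sobolev tail bound for the high-frequency block, and absorption of the residual terms via the lower bound on $M_{\rm PL}(k)$ from Lemma~\ref{lm:M_PL}. The only minor variation is that the paper estimates the tail via the pointwise Paley--Wiener--Schwartz bound $|\widehat\sigma(\gamma)|\lesssim(1+|\gamma|)^{-s}\|\sigma\|_{H^s}$ (valid since $\sigma$ is compactly supported and $s$ is an integer), obtaining $\rho^{d-s}$ rather than your Cauchy--Schwarz bound $\rho^{d/2-s}$; this pointwise step is where the paper actually uses $s>d$, whereas your tail argument would go through already for $s>d/2$, so your closing remark slightly misidentifies the role of that hypothesis.
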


\begin{proof}
It is clear to note that
\begin{align*}
\Vert \sigma\Vert_{L^\infty(B_1)} &= \sup_{x\in B_1}\left\vert \frac1{(2\pi)^d}\int_{\mathbb{R}^d} e^{\ii\gamma\cdot x} \widehat{\sigma}(\gamma)d\gamma\right\vert\\
&\leq \frac1{(2\pi)^d}\int_{|\gamma|\leq \rho} |\widehat{\sigma}(\gamma)|d\gamma+ \frac1{(2\pi)^d}\int_{|\gamma|>\rho} |\widehat{\sigma}(\gamma)|d\gamma\\
&=: I_1(\rho) + I_2(\rho).
\end{align*}
The low-frequency component $I_1(\rho)$ can be estimated using Lemma \ref{lm:Fourier_PL}. Specifically, for any $\rho\le2k$, we have 
\begin{eqnarray}\label{eq:I_1}
I_1(\rho)=\frac1{(2\pi)^d}\int_{|\gamma|\le\rho}|\widehat\sigma(\gamma)|d\gamma
\le \frac{\lambda(B_\rho)}{(2\pi)^d}\left[\frac{n^2}{\Cd^2} k^{m+4n-d-1}M_{\rm PL}(k)+C_1 k^{-1}\right],
\end{eqnarray}
where the Lebesgue measure $\lambda(B_\rho)$ is given by $
\lambda(B_\rho)=\frac{\pi^{\frac d2}}{\Gamma(1+\frac d2)}\rho^d.$

For the high-frequency component $I_2(\rho)$, by the Paley--Wiener--Schwartz theorem \cite{hormander03}, we obtain for any $\sigma\in H_0^s(B_1)$ that
\begin{equation*}
|\widehat{\sigma}(\gamma)|\lesssim (1+|\gamma|)^{-s}\|\sigma\|_{H^s(B_1)}.
\end{equation*}
Consequently, it holds for $s>d$ that
\begin{equation}\label{eq:I_2}
I_2(\rho)=\frac1{(2\pi)^d}\int_{|\gamma|>\rho} |\widehat{\sigma}(\gamma)|d\gamma\lesssim\|\sigma\|_{H^s(B_1)}\int_{|\gamma|>\rho}(1+|\gamma|)^{-s}d\gamma \lesssim \|\sigma\|_{H^s(B_1)}\rho^{d-s}.
\end{equation}
Combining \eqref{eq:I_1} and \eqref{eq:I_2}, we derive for any $k>1$ and $\rho\le 2k$ that 
\begin{eqnarray}\label{eq:sigma_PL}
\Vert \sigma\Vert_{L^\infty(B_1)} \lesssim \rho^d\left( k^{m+4n-d-1}M_{\rm PL}(k) + C_1 k^{-1}+ \|\sigma\|_{H^s(B_1)}\rho^{-s}\right). 
\end{eqnarray}

Since $\sigma\in C_0(B_1)$, $\sigma\ge0$, and $\sigma\not\equiv0$, it follows that $\|\sigma\|_{L^1(B_1)}>0$, and one can pick $k_0>\max\left\{\frac{2C_1}{\|\sigma\|_{L^1(B_1)}},1\right\}\geq1$. For any $k>k_0$, we have 
\[
k^{-1}<k_0^{-1}<\frac{\|\sigma\|_{L^1(B_1)}}{2C_1},
\]
which, together with the estimate stated in Lemma \ref{lm:M_PL}, yields 
\[
k^{d+1-m-4n}\le\frac{2n^2}{\Cd^2\|\sigma\|_{L^1(B_1)}} M_{\rm PL}(k).
\] 
Choosing $\rho=k^{\frac1s}<2k$ in \eqref{eq:sigma_PL}, we obtain 
\begin{align*}
\Vert \sigma\Vert_{L^\infty(B_1)}
&\lesssim k^{\frac ds+m+4n-d-1}\left(M_{\rm PL}(k)+ C_1 k^{-1}k^{d+1-m-4n}+\|\sigma\|_{H^s(B_1)}k^{-1}k^{d+1-m-4n}\right)\\
&\lesssim k^{\frac ds+m+4n-d-1}\left(M_{\rm PL}(k)+\frac{n^2}{\Cd^2}M_{\rm PL}(k)+\frac{n^2\|\sigma\|_{H^s(B_1)}}{\Cd^2C_1}M_{\rm PL}(k)\right)\\
&\lesssim k^{\frac ds+m+4n-d-1}\left(1+\|\sigma\|_{H^s(B_1)}\right)M_{\rm PL}(k),
\end{align*}
which completes the proof.
\end{proof}

\begin{remark}
A stability estimate of the strength function $\sigma$ in $L^1$ norm can be directly derived from Lemma \ref{lm:M_PL}. Given any $k>k_0$ where $k_0>\max\left\{\frac{2C_1}{\|\sigma\|_{L^1(B_1)}},1\right\}\geq1$, Lemma \ref{lm:M_PL} implies that 
\[
\|\sigma\|_{L^1(B_1)} \le  k^{4n+m-d-1}  \frac{2n^2}{\Cd^2}M_{\rm PL}(k).
\]
\end{remark}

\section{Electromagnetic waves}\label{sec:EM}

This section aims to examine the stability of the inverse random source problem of Maxwell's equations. We demonstrate that Lipschitz-type stability can be extended from the scalar case of polyharmonic waves to the vector case of electromagnetic waves, provided that the weak divergence-free condition is satisfied.

Consider the stochastic Maxwell's equations
\begin{equation}\label{Me}
\nabla\times \m{E} -\ii k \m{H}=0,\quad \nabla\times \m{H}+\ii k\m{E} = \m{f}\quad \text{ in }\mathbb{R}^3,
\end{equation}
where $k>0$ denotes the wavenumber, $\m{E}$ is the electric field, and $\m{H}$ represents the magnetic field. The electromagnatic fields $\m{E}$ and $\m{H}$ satisfy the following weak Silver--M\"{u}ller radiation conditions proposed in \cite{LW21Max}:
\begin{equation*}
\lim_{r\to\infty} \int_{|x|=r}(\m{H}\times \hat{x}-\m{E})\cdot {\m\phi} ds=0\quad \forall\,  \m\phi\in\m{\mathcal{D}}:=\mathcal D^3.
\end{equation*}

Assume that the random source $\m{f}$, representing the electric current density, satisfies Assumption \ref{assump2} with $d=3$ such that $m\in (-1,3]$ (cf. \cite{LW21Max}), and belongs to the space
\begin{equation*}
\mathbb{X}:=\left\{ \m{U}\in \m{\mathcal{D}}': \int_{\mathbb{R}^3} \m{U}\cdot (\nabla(\nabla\cdot \m\phi))dx=0\quad\forall\, \m\phi\in\m{\mathcal{D}} \right\},
\end{equation*}
which denotes the space of all distributions that are divergence-free in the sense of distributions. 

By eliminating the magnetic field $\m{H}$ in \eqref{Me}, it is shown in \cite{LW21Max} that the above assumptions on $\m{f}$ ensure the reduction of Maxwell's equations to the Helmholtz equation for the electric field $\m{E}$:
\begin{equation}\label{vHe}
\Delta\m E+k^2\m E=-{\rm i}k\m f.
\end{equation}
Furthermore, the electric field has an integral representation in terms of the source, where the integral kernel corresponds precisely to the fundamental solution of the Helmholtz equation. 

Comparing \eqref{eq:poly} with \eqref{vHe} and setting $n=1$ in \eqref{eq:farfield}, we derive the far-field pattern of the electric field:
\begin{equation*}
\m{E}^\infty(\hat{x},k) = \ii k \beta_3 \int_{\mathbb{R}^3}e^{-\ii k \hat{x}\cdot y}\m{f}(y)dy
\end{equation*}
where $\beta_3$ is given in \eqref{eq:beta}. The boundedness of the far-field pattern $\m E^\infty$ can be obtained similarly as in Theorem \ref{tm:u_infty}.

Using the measured far-field patterns as data, we define  
\begin{eqnarray*}
F_{\rm EM}(\hat{x},\hat{y},k):=\left\Vert\E\left[\m{E}^\infty(\hat{x},k) \m{E}^\infty(\hat{y},k)^\top\right]\right\Vert_F
\end{eqnarray*}
and
\begin{equation*}\label{eq:defM_elec}
M_{\rm EM}(k) := \Vert F_{\rm EM}(\cdot,\cdot,k)\Vert_{L^\infty(\mathbb S^2\times\mathbb S^2)}. 
\end{equation*}

In the following, we use notations
\[
\|\m A\|_{L^{\infty}(B;\mathbb F^{3\times3})}:=\sup_{x\in B}\|\m A(x)\|_F=\sup_{x\in B}\left(\sum_{i,j=1}^3|a_{ij}(x)|^2\right)^{\frac12}=\left(\sum_{i,j=1}^3\|a_{ij}\|_{L^\infty(B)}^2\right)^{\frac12}
\]
and 
\[
\|\m A\|_{H^s(B;\mathbb F^{3\times3})}:=\left(\sum_{i,j=1}^3\|a_{ij}\|_{H^s(B)}^2\right)^{\frac12}
\]
for any $\mathbb{F}^{3\times3}$-valued function $\m A=[a_{ij}]_{3\times3}$ defined in the domain $B$, where $\mathbb{F}^{3\times3}$ denotes the $3\times3$ matrix space over the field $\mathbb{F}=\mathbb{R}$ or $\mathbb{C}$ equipped with the Frobenius norm $\|\cdot\|_F$.

Similarly, we derive the following estimate for the low-frequency component of the Fourier coefficient matrix $\widehat{\m\sigma}(\gamma)$, which subsequently yields the stability estimate for the reconstruction of the strength matrix $\m\sigma$.

\begin{lemma}\label{lm:Fourier_EM}
Let $\m f\in\mathbb X$ satisfy Assumption \ref{assump2} with $d=3$. For any $k>1$ and $\gamma \in \mathbb{R}^3$ with $|\gamma|\leq 2k$, the Fourier coefficient matrix $\widehat{\m\sigma}$ satisfies
\[
\Vert\widehat{\m{\sigma}}(\gamma)\Vert_F\le \frac1{\beta_3^2}k^{m-2}M_{\rm EM}(k)+  C_2 k^{-1},
\]
where $C_2$ is given in \eqref{eq:B}.
\end{lemma}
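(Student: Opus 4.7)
The plan is to transcribe the scalar argument of Lemma~\ref{lm:Fourier_PL} to the matrix-valued setting, replacing absolute values with the Frobenius norm. For any $\gamma\in\mathbb R^3$ with $|\gamma|\le 2k$, I would first reuse the geometric construction from Lemma~\ref{lm:Fourier_PL}: pick a unit vector $d_1\perp\gamma$ and set $\hat x=(\gamma+\sqrt{4k^2-|\gamma|^2}\,d_1)/(2k)$ and $\hat y=(\gamma-\sqrt{4k^2-|\gamma|^2}\,d_1)/(2k)$, so that $\hat x,\hat y\in\mathbb S^2$ and $k(\hat x+\hat y)=\gamma$.

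Next, I would compute the matrix correlation $\E[\m E^\infty(\hat x,k)\m E^\infty(\hat y,k)^\top]$. Substituting the far-field formula yields a prefactor $(\ii k\beta_3)^2=-k^2\beta_3^2$ multiplying the double integral $\int\int e^{-\ii k\hat x\cdot z_1}e^{-\ii k\hat y\cdot z_2}K_{\m f}(z_1,z_2)dz_1dz_2$, where $K_{\m f}$ is the matrix-valued pseudo-differential kernel from Assumption~\ref{assump2}. Expanding $K_{\m f}$ via the symbol $\m c(z_1,\xi)=\m\sigma(z_1)|\xi|^{-m}+\m r(z_1,\xi)$ and performing the $z_2$-integration produces a Dirac delta forcing $\xi=-k\hat y$; using $k(\hat x+\hat y)=\gamma$ then gives
\begin{equation*}
\E[\m E^\infty(\hat x,k)\m E^\infty(\hat y,k)^\top]=-k^2\beta_3^2\Bigl[k^{-m}\widehat{\m\sigma}(\gamma)+\int_{B_1}\m r(z_1,-k\hat y)e^{-\ii\gamma\cdot z_1}dz_1\Bigr],
\end{equation*}
which is the matrix analog of the intermediate identity \eqref{eq:F} in the proof of Lemma~\ref{lm:Fourier_PL}.

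Finally, I would take the Frobenius norm, apply the reverse triangle inequality, and bound the residual matrix integral. Using the pointwise estimate $\|\m r(z_1,-k\hat y)\|_F\lesssim k^{-m-1}$ from $\m r\in\mathcal S^{-m-1}$, together with the triangle inequality for the matrix integral, gives the bound $C_2 k^{-m-1}$ exactly as in the derivation of \eqref{eq:B}, since the unimodular phase $e^{-\ii\gamma\cdot z_1}$ does not affect that argument. Rearranging for $\|\widehat{\m\sigma}(\gamma)\|_F$ and using $F_{\rm EM}(\hat x,\hat y,k)\le M_{\rm EM}(k)$ then delivers the stated inequality. The calculation is structurally identical to the scalar case; the only point of care is to preserve the correct order of the outer product $\m f(z_1)\m f(z_2)^\top$ when exchanging expectation with the Fourier integral, which is justified by the definition of $K_{\m f}$ in Section~\ref{sec:2.2}. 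There is no substantive analytic obstacle.
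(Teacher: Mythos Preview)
Your proposal is correct and follows essentially the same route as the paper's proof: the same geometric choice of $\hat x,\hat y$ with $k(\hat x+\hat y)=\gamma$, the same expansion of the matrix correlation via the pseudo-differential kernel $K_{\m f}$ leading to the identity $\E[\m E^\infty(\hat x,k)\m E^\infty(\hat y,k)^\top]=-k^2\beta_3^2\bigl[k^{-m}\widehat{\m\sigma}(\gamma)+\int_{B_1}\m r(z_1,-k\hat y)e^{-\ii\gamma\cdot z_1}dz_1\bigr]$, and the same residual bound via \eqref{eq:B}. Your remark that the unimodular phase does not affect the constant $C_2$ is exactly how the paper invokes \eqref{eq:B} here.
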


\begin{proof}
For any fixed $\gamma \in \mathbb{R}^3$ with $|\gamma|\leq 2k$, there exists a unit vector $d_1$ such that $d_1\cdot \gamma=0$. It is clear to note that $\hat{x} = \frac{\gamma + \sqrt{4k^2-|\gamma|^2}d_1}{2k}, \hat{y} = \frac{\gamma - \sqrt{4k^2-|\gamma|^2}d_1}{2k}$ are unit vectors in $\mathbb S^2$ and satisfy $k(\hat{x}+\hat{y} )=\gamma$. A straightforward calculation yields 
  \begin{align}\label{eq:E}
   -\frac1{k^2\beta_3^2} \E\left[\m{E}^\infty(\hat{x},k) \m{E}^\infty(\hat{y},k)^\top\right]
   &= \int_{\mathbb{R}^3} \int_{\mathbb{R}^3} e^{-\ii k\hat{x}\cdot z_1} e^{-\ii k\hat{y}\cdot z_2}K_{\m{f}}(z_1,z_2) dz_1 dz_2\notag\\
    &=\int_{\mathbb{R}^3} \int_{\mathbb{R}^3} e^{-\ii k\hat{x}\cdot z_1} e^{-\ii k\hat{y}\cdot z_2}\left[\frac1{(2\pi)^3}\int_{\mathbb R^3} e^{\ii (z_1-z_2)\cdot \xi}{\boldsymbol c}(z_1, \xi)d\xi\right] dz_1 dz_2 \notag\\
    &=\frac1{(2\pi)^3}\int_{\mathbb{R}^3} \int_{\mathbb{R}^3} \left( \int_{\mathbb{R}^3}  e^{\ii (-k\hat{y}-\xi)\cdot z_2} dz_2 \right){\boldsymbol c}(z_1, \xi)e^{-\ii (k\hat{x}-\xi)\cdot z_1} d\xi dz_1\notag\\
     &=\int_{\mathbb{R}^3} \int_{\mathbb{R}^3}\delta(\xi+k\hat{y})\left(\m{\sigma}(z_1)|\xi|^{-m}+{\boldsymbol r}(z_1, \xi)\right)e^{-\ii (k\hat{x}-\xi)\cdot z_1} d\xi dz_1\notag\\
       &=\left[\int_{\mathbb{R}^3} e^{-\ii k(\hat{x}+\hat{y})\cdot z_1} \m{\sigma}(z_1)k^{-m} dz_1+\int_{B_1} {\boldsymbol r}(z_1, -k\hat{y})e^{-\ii k(\hat{x}+\hat{y})\cdot z_1} dz_1\right]\notag\\
       &= k^{-m}\widehat{\m{\sigma}}( \gamma)+ \int_{B_1}{\boldsymbol r}(z_1, -k\hat{y})e^{-\ii \gamma\cdot z_1} dz_1,
    \end{align}
where
\begin{equation*}
\lt\Vert \int_{B_1} {\boldsymbol r}(z_1, -k\hat{y})e^{-\ii \gamma \cdot z_1} dz_1\rt\Vert_F\le C_2 k^{-m-1}
\end{equation*}
according to \eqref{eq:B}. Thus for any $\gamma\in\mathbb{R}^3$ with $|\gamma|\leq 2k$, we obtain 
\begin{align*}
\left\|\widehat{\m{\sigma}}(\gamma)\right\|_F
&\le \frac1{\beta_3^2}k^{m-2} \lt\Vert\E\left[\m{E}^\infty(\hat{x},k)\m{E}^\infty(\hat{y},k)^\top\right]\rt\Vert_F+k^m\lt\Vert \int_{B_1} {\boldsymbol r}(z_1, -k\hat{y})e^{-\ii \gamma \cdot z_1} dz_1\rt\Vert_F\\
&\le \frac1{\beta_3^2}k^{m-2}F_{\rm EM}(\hat x,\hat y,k)+  C_2 k^{-1}\\
&\le\frac1{\beta_3^2}k^{m-2}M_{\rm EM}(k)+  C_2 k^{-1},
\end{align*}
which completes the proof.
\end{proof}

Utilizing Lemma \ref{lm:Fourier_EM}, we can derive the stability result through a similar argument as presented in the proof of Theorem \ref{stab_poly}. 

\begin{theorem}
Let $\m f\in\mathbb X$ satisfy Assumption \ref{assump2} with $d=3$ and additionally $s>3$. There exists a constant $k_0>1$ such that for any $k > k_0$, the following estimate holds:
\begin{equation*}
\Vert \m{\sigma}\Vert_{L^\infty(B_1;\mathbb R^{3\times3})} \lesssim k^{\frac3s+m-2}\left(1+\|\m\sigma\|_{H^s(B_1;\mathbb R^{3\times3})}\right)M_{\rm EM}(k).
\end{equation*}
\end{theorem}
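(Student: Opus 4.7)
\medskip

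\noindent\textbf{Proof proposal.}
The plan is to mirror the scalar-case argument of Theorem~\ref{stab_poly} with $n=1$, $d=3$, now handled entrywise and re-assembled in the Frobenius norm. By Assumption~\ref{assump2}, $\m\sigma\in H_0^s(B_1;\mathbb R^{3\times3})\cap C_0(B_1;\mathbb R^{3\times3})$, so at every $x\in B_1$ inverse Fourier inversion applies to each entry and yields
\[
\|\m\sigma(x)\|_F \;\le\; \frac1{(2\pi)^3}\int_{\mathbb R^3}\bigl\|\widehat{\m\sigma}(\gamma)\bigr\|_F\,d\gamma,
\]
after which I split the integral at $|\gamma|=\rho$ into $I_1(\rho)+I_2(\rho)$. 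The low-frequency piece $I_1(\rho)$ is bounded for any $\rho\le 2k$ by invoking Lemma~\ref{lm:Fourier_EM} directly, giving
\[
I_1(\rho)\lesssim \rho^3\bigl[\beta_3^{-2}k^{m-2}M_{\rm EM}(k)+C_2 k^{-1}\bigr].
\]
For the high-frequency piece $I_2(\rho)$, since $\m\sigma$ is compactly supported in $B_1$, the Paley--Wiener--Schwartz theorem applied to each entry $\sigma_{ij}$ gives $|\widehat{\sigma_{ij}}(\gamma)|\lesssim(1+|\gamma|)^{-s}\|\sigma_{ij}\|_{H^s(B_1)}$; summing in Frobenius norm and integrating yields $I_2(\rho)\lesssim\|\m\sigma\|_{H^s(B_1;\mathbb R^{3\times3})}\rho^{3-s}$ provided $s>3$. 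Choosing $\rho=k^{1/s}$, which is $\le 2k$ for $k\ge 1$, then produces the template bound
\[
\|\m\sigma\|_{L^\infty(B_1;\mathbb R^{3\times3})}\lesssim k^{3/s}\Bigl(k^{m-2}M_{\rm EM}(k)+C_2 k^{-1}+\|\m\sigma\|_{H^s(B_1;\mathbb R^{3\times3})}k^{-1}\Bigr).
\]

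The main obstacle, and the step not already packaged as a lemma in the excerpt, is absorbing the two residual terms $C_2 k^{-1}$ and $\|\m\sigma\|_{H^s}k^{-1}$ into $M_{\rm EM}(k)$. To accomplish this I need an analog of Lemma~\ref{lm:M_PL} providing a matching lower bound on $M_{\rm EM}(k)$. I would derive it directly from identity~\eqref{eq:E}: choosing $\hat y=-\hat x$ so that $\gamma=k(\hat x+\hat y)=0$, the identity becomes
\[
\E\bigl[\m E^\infty(\hat x,k)\m E^\infty(-\hat x,k)^\top\bigr]
=-k^2\beta_3^2\Bigl[k^{-m}\widehat{\m\sigma}(0)+\int_{B_1}\m r(z_1,k\hat x)\,e^{-\ii 0\cdot z_1}dz_1\Bigr],
\]
so by the reverse triangle inequality in Frobenius norm and estimate~\eqref{eq:B},
\[
M_{\rm EM}(k)\;\ge\;\beta_3^{2}\bigl(k^{2-m}\|\widehat{\m\sigma}(0)\|_F-C_2 k^{1-m}\bigr).
\]
Since $\m\sigma$ is non-negative definite and not identically zero, $\widehat{\m\sigma}(0)=\int_{B_1}\m\sigma(z)dz$ is non-negative definite with strictly positive trace, hence $\|\widehat{\m\sigma}(0)\|_F>0$; (the trivial case $\m\sigma\equiv 0$ is excluded by rendering the target inequality vacuous).

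Picking $k_0>\max\{2C_2/\|\widehat{\m\sigma}(0)\|_F,1\}$ guarantees, for $k>k_0$, the clean lower bound $k^{m-2}\le \frac{2}{\beta_3^2\|\widehat{\m\sigma}(0)\|_F}M_{\rm EM}(k)$. Inserting this into the template, the terms $k^{3/s}C_2 k^{-1}$ and $k^{3/s}\|\m\sigma\|_{H^s}k^{-1}$ each get multiplied by $k^{m-2}\cdot k^{2-m}$ and bounded by constant multiples of $k^{3/s+m-2}M_{\rm EM}(k)$ and $k^{3/s+m-2}\|\m\sigma\|_{H^s}M_{\rm EM}(k)$, respectively; summing yields the claimed Lipschitz estimate. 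The only bookkeeping subtlety is keeping track that the $H^s\to L^\infty$ embedding threshold $s>3$ is what is needed to make $I_2$ integrable at infinity, and that $s>3>\tfrac 34+1$ is consistent with the regularity demanded in Assumption~\ref{assump2}.
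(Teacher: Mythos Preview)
Your proposal is correct and follows essentially the same approach as the paper: the paper likewise splits the inverse Fourier integral at $|\gamma|=\rho$, applies Lemma~\ref{lm:Fourier_EM} and the entrywise Paley--Wiener--Schwartz bound, derives the lower bound $M_{\rm EM}(k)\ge\beta_3^2k^{2-m}\bigl(\|\int_{B_1}\m\sigma\|_F-C_2k^{-1}\bigr)$ inline by setting $\hat y=-\hat x$ in \eqref{eq:E}, and then chooses $\rho=k^{1/s}$ and $k_0>\max\{2C_2/C_\sigma,1\}$ to absorb the residual terms. Your justification that $\|\widehat{\m\sigma}(0)\|_F>0$ via non-negative definiteness and positive trace is in fact slightly more explicit than what the paper records.
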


\begin{proof}
Note that
\begin{align*}
\Vert \m{\sigma}\Vert_{L^\infty(B_1;\mathbb R^{3\times3})} 
&=\sup_{x\in B_1}\left\|\frac1{(2\pi)^3}\int_{\mathbb R^3}\widehat{\m\sigma}(\gamma)e^{-{\rm i}x\cdot\gamma}d\gamma\right\|_F\\
&\le \frac1{(2\pi)^3}\int_{|\gamma|\leq \rho} \|\widehat{\m{\sigma}}(\gamma)\|_Fd\gamma+ \frac1{(2\pi)^3}\int_{|\gamma|>\rho} \|\widehat{\m{\sigma}}(\gamma)\|_Fd\gamma\\ 
&=: I_1(\rho)+I_2(\rho). 
\end{align*}
We have from Lemma \ref{lm:Fourier_EM} that 
\[
I_1(\rho)\le\frac{\lambda(B_\rho)}{(2\pi)^3}\left[\frac1{\beta_3^2}k^{m-2}M_{\rm EM}(k)+  C_2 k^{-1}\right]. 
\]
For $s>3$, it follows from the Paley--Wiener--Schwartz theorem used in the proof of Theorem \ref{stab_poly} that 
\begin{align*}
I_2(\rho)
&=\frac1{(2\pi)^3}\int_{|\gamma|>\rho}\left(\sum_{i,j=1}^3 \left|\widehat\sigma_{ij}(\gamma)\right|^2\right)^{\frac12}d\gamma\\
&\lesssim\int_{|\gamma|>\rho}(1+|\gamma|)^{-s}d\gamma\left(\sum_{i,j=1}^3\|\sigma_{ij}\|^2_{H^s(B_1)}\right)^{\frac12}\\
&\lesssim\|\m\sigma\|_{H^s(B_1;\mathbb R^{3\times3})}\rho^{3-s}. 
\end{align*}
Combining the above estimates leads to 
\begin{eqnarray}\label{eq:sigma_EM}
\Vert \m{\sigma}\Vert_{L^\infty(B_1;\mathbb R^{3\times3})} \lesssim\rho^3\left[k^{m-2}M_{\rm EM}(k)+C_2 k^{-1}+\|\m\sigma\|_{H^s(B_1;\mathbb R^{3\times3})}\rho^{-s}\right].
\end{eqnarray}

The lower bound of $M_{\text{EM}}(k)$ can be obtained similarly to Lemma \ref{lm:M_PL} by utilizing \eqref{eq:E}:
\begin{align}
M_{\rm EM}(k)&=\sup_{\hat x,\hat y\in\mathbb S^2}\left\|\E\left[\m{E}^\infty(\hat{x},k) \m{E}^\infty(\hat{y},k)^\top\right]\right\|_F \nonumber \\
&\ge\left\|\E\left[\m{E}^\infty(\hat{x},k) \m{E}^\infty(-\hat{x},k)^\top\right]\right\|_F\nonumber\\
&=k^2\beta_3^2\left\| k^{-m}\int_{B_1}\m\sigma(z_1)dz_1+\int_{B_1}{\boldsymbol c} (z_1,k\hat x)dz_1\right\|_F\nonumber\\
&\ge\beta_3^2k^{2-m}\left\|\int_{B_1}\m\sigma(z_1)dz_1\right\|_F-\beta_3^2k^2\left\|\int_{B_1}{\boldsymbol c}(z_1,k\hat x)dz_1\right\|_F\nonumber\\
&\ge\beta_3^2k^{2-m}\left(\left\|\int_{B_1}\m\sigma(z_1)dz_1\right\|_F-C_2 k^{-1}\right).\label{eq:estEMk}
\end{align}
Denoting $C_{\sigma}:=\left\|\int_{B_1}\m\sigma(z_1)dz_1\right\|_F>0$ for simplicity, we choose $k_0>\max\{\frac{2C_2}{C_\sigma},1\}\geq 1$. For any $k>k_0$, 
\[
k^{-1}<k_0^{-1}<\frac{C_\sigma}{2C_2},
\]
and together with \eqref{eq:estEMk},
\[
M_{\rm EM}(k)\ge\frac{\beta_3^2C_\sigma}2k^{2-m},
\]
which further implies
\[
k^{-1}k^{2-m}<\frac{C_\sigma}{2C_2}\frac{2M_{\rm EM}(k)}{\beta_3^2C_\sigma}=\frac{M_{\rm EM}(k)}{C_2\beta_3^2}.
\]

Choosing $\rho=k^{\frac1s}<2k$, we obtain from \eqref{eq:sigma_EM} that 
\begin{align*}
\Vert \m{\sigma}\Vert_{L^\infty(B_1;\mathbb R^{3\times3})} &\lesssim k^{\frac3s+m-2}\left[M_{\rm EM}(k)+C_2 k^{-1}k^{2-m}+\|\m\sigma\|_{H^s(B_1;\mathbb R^{3\times3})}k^{-1}k^{2-m}\right]\\
&\lesssim k^{\frac3s+m-2}\left(1+\|\m\sigma\|_{H^s(B_1;\mathbb R^{3\times3})}\right)M_{\rm EM}(k),
\end{align*}
which completes the proof. 
\end{proof}

\section{Elastic  waves}\label{sec:EL}

In this section, we investigate the stability of the inverse random source for elastic waves, which is more complex compared to the cases of polyharmonic and electromagnetic waves.
 
Consider the stochastic elastic wave equation
\begin{equation*}\label{eqn:elas_main}
\Delta^*\m{u} +k^2 \m{u} = \m{f} \quad \text{in } \mathbb{R}^d,
\end{equation*}
where $\Delta^*: = \mu\Delta +(\lambda+\mu)\nabla(\nabla\cdot)$ is the Lam\'{e} operator with Lam\'{e} constants $\mu$ and $\lambda$ satisfying $\mu>0$ and $\lambda+\mu>0$, $k>0$ denotes the wavenumber, and $\m{f}$ is a GMIG random source satisfying Assumption \ref{assump2} with $d\in\{2,3\}$. The goal is to establish the stability estimate of its strength matrix $\m{\sigma}$ by estimating the Fourier coefficients of the entries of $\m\sigma$.

As shown in \cite{bao2020stability}, the displacement vector $\boldsymbol{u}$ can be separated into its compressional component $\boldsymbol{u}_p$ and shear component $\boldsymbol{u}_s$. These components satisfy the Kupradze--Sommerfeld radiation conditions
\begin{equation*}\label{eqn:Kup_Som}
\lim_{r\to\infty} r^{\frac{d-1}{2}} (\partial_r \m{u}_p-\ii k_p \m{u}_p) =0,\quad \lim_{r\to\infty}  r^{\frac{d-1}{2}} (\partial_r  \m{u}_s -\ii k_s  \m{u}_s) =0,\quad r=|x|,
\end{equation*}
where $k_p$ and $k_s$ represent the compressional and shear wavenumbers, respectively, given by
\begin{equation}\label{eq:kps}
k_p = c_pk,\quad k_s = c_sk
\end{equation}
with $c_p = (\lambda+2\mu )^{-\frac12}$ and $c_s = \mu^{-\frac12}$.

According to\cite[Theorem 6.1]{LLW22Far}, the compressional and shear far-field patterns have the following forms:
\begin{align}
\m{u}_p^\infty(\hat{x},k) &= -\Cd c_p^{\frac{d+2}{2}} k^{\frac{d-3}{2}} \hat{x}\hat{x}^\top \int_{\mathbb{R}^d} e^{-\ii k_p \hat{x}\cdot y}\m{f}(y)dy, \label{eqn:farp}\\
\m{u}_s^\infty(\hat{x},k) &= -\Cd c_s^{\frac{d+2}{2}} k^{\frac{d-3}{2}}\left(\m{I}- \hat{x}\hat{x}^\top\right) \int_{\mathbb{R}^d} e^{-\ii k_s \hat{x}\cdot y}\m{f}(y)dy,\label{eqn:fars}
\end{align}
where $\m{I}$ is the $d\times d$ identity matrix. The boundedness of the compressional and shear far-field patterns $\boldsymbol{u}_p^\infty(\hat{x})$ and $ \boldsymbol{u}_s^\infty(\hat{x})$ can be obtained similarly to the polyharmonic wave case, as given in Theorem \ref{tm:u_infty}.

With the measured far-field patterns  $\m{u}^\infty_p$ and $\m{u}^\infty_s$, we define the correlation data
\begin{equation*}\label{eq:defM_elas}
 M_{\rm EL}(k) := \max\left\{ \Vert F_p(\cdot,\cdot,k)\Vert_{L^\infty(\mathbb S^{d-1}\times\mathbb S^{d-1})},\Vert F_s(\cdot,\cdot,k)\Vert_{L^\infty(\mathbb S^{d-1}\times\mathbb S^{d-1})}\right\},
 \end{equation*}
where 
\begin{eqnarray*}
F_p(\hat{x},\hat{y},k)=\left\Vert \E\left[ \m{u}^\infty_p(\hat{x},k) \m{u}^\infty_p(\hat{y},k)^\top\right]\right\Vert_F, \quad F_s(\hat{x},\hat{y},k) =\left\Vert \E\left[ \m{u}^\infty_s(\hat{x},k) \m{u}^\infty_s(\hat{y},k)^\top\right]\right\Vert_F.
\end{eqnarray*}

\begin{lemma}\label{lm:M_EL}
For any $k>1$, the following estimate holds:
\[
M_{\rm EL}(k)\ge\frac{\Cd^2c_p^{d+2-m}}dk^{d-3-m}\|\Tr\m\sigma\|_{L^1(B_1)}-C_2\Cd^2c_p^{d+1-m}k^{d-4-m},
\]
where $\Tr\m\sigma$ denotes the trace of the matrix $\m\sigma$ and $C_2$ is given in \eqref{eq:B}.
\end{lemma}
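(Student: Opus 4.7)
The plan is to lower-bound $M_{\rm EL}(k)$ using only the compressional correlation, i.e. via $\|F_p(\cdot,\cdot,k)\|_{L^\infty(\mathbb S^{d-1}\times\mathbb S^{d-1})}$, and to mirror the argument used in the electromagnetic case around \eqref{eq:estEMk}. First I would specialize to $\hat y = -\hat x$ so that the phase $\gamma := k_p(\hat x+\hat y)$ vanishes, and push the Fourier/$\delta$-function manipulation of \eqref{eq:E} through with $k$ replaced by the compressional wavenumber $k_p = c_p k$. Using the far-field formula \eqref{eqn:farp} and pulling the rank-one projectors outside the expectation, this produces
\begin{equation*}
\E\lt[\m u_p^\infty(\hat x,k)\m u_p^\infty(-\hat x,k)^\top\rt] = \Cd^2 c_p^{d+2}k^{d-3}\,\hat x\hat x^\top\m M(\hat x)\hat x\hat x^\top,
\end{equation*}
where $\m M(\hat x):= k_p^{-m}\int_{B_1}\m\sigma(z)\,dz + \int_{B_1}\m r(z,k_p\hat x)\,dz$.

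Next I would exploit the rank-one structure: for any matrix $\m B$, one has $\hat x\hat x^\top\m B\hat x\hat x^\top = (\hat x^\top\m B\hat x)\,\hat x\hat x^\top$, and since $\|\hat x\hat x^\top\|_F = 1$ this reduces the Frobenius norm to the scalar $|\hat x^\top\m B\hat x|$. Splitting $\m M(\hat x)$ into its principal and residual pieces, invoking the reverse triangle inequality, using the non-negative definiteness of $\m\sigma$ (so that the absolute value on the principal term can be dropped), and applying the residual bound \eqref{eq:B} to control $\|\int_{B_1}\m r(z,k_p\hat x)\,dz\|_F$ by $C_2 k_p^{-m-1}$, yields
\begin{equation*}
F_p(\hat x,-\hat x,k) \ge \Cd^2 c_p^{d+2}k^{d-3}\lt[k_p^{-m}\int_{B_1}\hat x^\top\m\sigma(z)\hat x\,dz - C_2 k_p^{-m-1}\rt].
\end{equation*}

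Finally I would extract a lower bound for $\sup_{\hat x\in\mathbb S^{d-1}}\int_{B_1}\hat x^\top\m\sigma(z)\hat x\,dz$ in terms of $\|\Tr\m\sigma\|_{L^1(B_1)}$ by averaging over the unit sphere. Using the standard identity $\int_{\mathbb S^{d-1}}\hat x_i\hat x_j\,d\hat x = \frac{|\mathbb S^{d-1}|}{d}\delta_{ij}$ together with Fubini gives
\begin{equation*}
\frac{1}{|\mathbb S^{d-1}|}\int_{\mathbb S^{d-1}}\int_{B_1}\hat x^\top\m\sigma(z)\hat x\,dz\,d\hat x = \frac{1}{d}\int_{B_1}\Tr\m\sigma(z)\,dz = \frac{1}{d}\|\Tr\m\sigma\|_{L^1(B_1)},
\end{equation*}
where the last equality uses that $\Tr\m\sigma\ge 0$ pointwise since $\m\sigma$ is non-negative definite. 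As the supremum dominates the spherical average, substituting $k_p = c_p k$ and using $M_{\rm EL}(k)\ge\sup_{\hat x}F_p(\hat x,-\hat x,k)$ assembles the claimed bound.

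The only non-routine obstacle is the passage from the quadratic form $\hat x^\top\m\sigma(z)\hat x$ to the scalar $\Tr\m\sigma(z)$. Two points need care: first, non-negative-definiteness of $\m\sigma$ is essential so that the dominant term in the bound is genuinely $\|\Tr\m\sigma\|_{L^1(B_1)}$ rather than an absolute value that could a priori be strictly smaller; and second, the residual $\m r(\cdot,k_p\hat x)$ depends on the direction $\hat x$, so its uniform bound must be subtracted before taking the supremum in $\hat x$, rather than attempting to split the supremum of a difference into a difference of suprema.
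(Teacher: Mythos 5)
Your proposal is correct and follows essentially the same route as the paper: choose backscattering pairs $\hat y=-\hat x$ so the phase vanishes, reduce the Frobenius norm of the rank-one sandwiched correlation to the scalar quadratic form $\hat x^\top\bigl(\int_{B_1}\m\sigma\bigr)\hat x$, drop the absolute value by non-negative definiteness, control the residual uniformly via \eqref{eq:B}, and recover $\frac1d\|\Tr\m\sigma\|_{L^1(B_1)}$ by averaging. The only (cosmetic) difference is that the paper averages the $d$ inequalities obtained at the canonical directions $\hat x=e_i$, whereas you average the quadratic form over the whole sphere using $\int_{\mathbb S^{d-1}}\hat x_i\hat x_j\,d\hat x=\frac{|\mathbb S^{d-1}|}{d}\delta_{ij}$; both give the same constant.
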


\begin{proof}
For any $\hat x,\hat y\in\mathbb S^{d-1}$ and $k>1$, it follows from \eqref{eqn:farp} that 
\begin{align}\label{eq:u_p}
&\mathbb E\left[\m{u}^\infty_p(\hat{x},k) \m{u}^\infty_p(\hat{y},k)^\top\right]\notag\\
&=\Cd^2c_p^{d+2}k^{d-3}\hat x\hat x^\top\left[\int_{\mathbb R^d}\int_{\mathbb R^d}e^{-{\rm i}k_p\hat x\cdot z_1-{\rm i}k_p\hat y\cdot z_2}\mathbb E[\m f(z_1)\m f(z_2)]dz_1dz_2\right]\hat y\hat y^\top\notag\\
&=\Cd^2c_p^{d+2}k^{d-3}\hat x\hat x^\top\left[\int_{\mathbb R^d}\int_{\mathbb R^d}e^{-{\rm i}k_p\hat x\cdot z_1-{\rm i}k_p\hat y\cdot z_2}\frac1{(2\pi)^d}\int_{\mathbb R^d}e^{{\rm i}(z_1-z_2)\cdot\xi}{\boldsymbol c}(z_1,\xi)d\xi dz_1dz_2\right]\hat y\hat y^\top\notag\\
&=\Cd^2c_p^{d+2}k^{d-3}\hat x\hat x^\top\left[\int_{\mathbb R^d}\int_{\mathbb R^d}\delta(\xi+k_p\hat y)e^{-{\rm i}k_p\hat x\cdot z_1}e^{{\rm i}z_1\cdot\xi}{\boldsymbol c}(z_1,\xi)d\xi dz_1\right]\hat y\hat y^\top\notag\\
&=\Cd^2c_p^{d+2}k^{d-3}\hat x\hat x^\top\left[\int_{\mathbb R^d}e^{-{\rm i}k_p(\hat x+\hat y)\cdot z_1}\left(\m\sigma(z_1)|k_p|^{-m}+{\boldsymbol r}(z_1,-k_p\hat y)\right)dz_1\right]\hat y\hat y^\top\notag\\
&=\Cd^2c_p^{d+2-m}k^{d-3-m}\hat x\hat x^\top\left[\int_{\mathbb R^d}e^{-{\rm i}k_p(\hat x+\hat y)\cdot z_1}\m\sigma(z_1)dz_1\right]\hat y\hat y^\top\notag\\
&\quad +\Cd^2c_p^{d+2}k^{d-3}\hat x\hat x^\top\left[\int_{\mathbb R^d}e^{-{\rm i}k_p(\hat x+\hat y)\cdot z_1}{\boldsymbol r}(z_1,-k_p\hat y)dz_1\right]\hat y\hat y^\top.
\end{align}
Choosing $\hat y=-e_i$ and $\hat x=e_i$, $i=1,\cdots,d$, where $e_i$ is the unit vector whose $i$th entry is one and other entries are zeroes, we obtain from \eqref{eq:B} that 
\begin{align*}
\Vert F_p(\cdot,\cdot,k)\Vert_{L^\infty(\mathbb S^{d-1}\times\mathbb S^{d-1})}
&\ge \left\|\mathbb E\left[\m{u}^\infty_p(e_i,k) \m{u}^\infty_p(-e_i,k)^\top\right]\right\|_F\\
&\ge\Cd^2c_p^{d+2-m}k^{d-3-m}\left\|e_ie_i^\top\left[\int_{\mathbb R^d}\m\sigma(z_1)dz_1\right]e_ie_i^\top\right\|_F\\
&\quad -\Cd^2c_p^{d+2}k^{d-3}\left\|e_ie_i^\top\left[\int_{\mathbb R^d}{\boldsymbol r}(z_1,-k_p\hat y)dz_1\right]e_ie_i^\top\right\|_F\\
&=\Cd^2c_p^{d+2-m}k^{d-3-m}\left|\int_{\mathbb R^d}\sigma_{ii}(z_1)dz_1\right|-\Cd^2c_p^{d+2}k^{d-3}\left|\int_{\mathbb R^d}r_{ii}(z_1,-k_p\hat y)dz_1\right|\\
&\ge \Cd^2c_p^{d+2-m}k^{d-3-m}\int_{\mathbb R^d}\sigma_{ii}(z_1)dz_1-C_2\Cd^2c_p^{d+1-m}k^{d-4-m}, 
\end{align*}
where in the last step we utilize the fact that $\sigma_{ii}\ge0$ for any $i=1,\cdots,d,$ since $\boldsymbol{\sigma}$ is non-negative definite. Combining the above estimates for $i=1,\cdots,d$, we obtain
\[
\Vert F_p(\cdot,\cdot,k)\Vert_{L^\infty(\mathbb S^{d-1}\times\mathbb S^{d-1})}
\ge\frac{\Cd^2c_p^{d+2-m}}dk^{d-3-m}\|\Tr\m\sigma\|_{L^1(B_1)}-C_2\Cd^2c_p^{d+1-m}k^{d-4-m},
\]
which completes the proof.
\end{proof}

As stated at the beginning of this section, the stability estimate for the reconstruction of the strength $\m\sigma$ for elastic waves is more challenging than those for the polyharmonic and electromagnetic waves presented in Sections \ref{sec:PL} and \ref{sec:EM}. The reason is that the matrices $\hat x\hat x^\top$ and $\m I-\hat x\hat x^\top$ involved in the far-field patterns \eqref{eqn:farp}--\eqref{eqn:fars} are not invertible, which makes it  different from Lemmas \ref{lm:Fourier_PL} and \ref{lm:Fourier_EM} and rather difficult to get the estimate for the Fourier coefficient matrix $\widehat{\m\sigma}(\gamma)$.

\begin{lemma}\label{lm:Fourier_EL}
Let $\m f$ satisfy Assumption \ref{assump2} with $d\in\{2,3\}$. Assume in addition that $\m{\sigma}$ is a symmetric matrix. For any $k>1$ and $\gamma\in\mathbb R^d$ with $|\gamma|\leq k_p$, it holds 
\[
\left|\widehat{\Tr{\m\sigma}}(\gamma)\right|\le \frac{2dc_p^{m-d-2}}{\Cd^2}k^{m-d+3}M_{\rm EL}(k)+2dC_2 c_p^{-1}k^{-1}
\]
where $C_2$ is given in \eqref{eq:B}.
\end{lemma}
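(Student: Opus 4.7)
The plan is to adapt the bilinear extraction argument of the preceding lemmas (cf.\ Lemmas~\ref{lm:Fourier_PL} and~\ref{lm:Fourier_EM}) to the elastic setting, working around the non-invertibility of the rank-one projectors $\hat x\hat x^\top$ that appears in the compressional far-field formula.

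First, I would compute $\E[\boldsymbol u_p^\infty(\hat x,k)\boldsymbol u_p^\infty(\hat y,k)^\top]$ along the same lines as \eqref{eq:u_p}. Setting $\gamma := k_p(\hat x+\hat y)$, this expresses the correlation as a rank-one matrix $\Cd^2 c_p^{d+2-m} k^{d-3-m} (\hat x^\top\widehat{\boldsymbol\sigma}(\gamma)\hat y)\,\hat x\hat y^\top$ plus a residual of the same rank-one form, whose coefficient is controlled via \eqref{eq:B}. Taking the Frobenius norm and using $\|\hat x\hat y^\top\|_F=1$ yields the scalar bilinear estimate
\[
|\hat x^\top\widehat{\boldsymbol\sigma}(\gamma)\hat y| \le \frac{k^{m-d+3}}{\Cd^2 c_p^{d+2-m}}\,M_{\rm EL}(k) + C_2\,c_p^{-1}k^{-1},
\]
valid for every pair of unit vectors satisfying $k_p(\hat x+\hat y)=\gamma$.

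Second, for $\gamma$ with $|\gamma|\le k_p$ I would parametrize the admissible pairs as $\hat x = \gamma/(2k_p) + \tau$, $\hat y = \gamma/(2k_p) - \tau$, where $\tau\perp\gamma$ and $|\tau|^2 = 1-|\gamma|^2/(4k_p^2)\ge 3/4$. Using the symmetry of $\widehat{\boldsymbol\sigma}(\gamma)$ inherited from the symmetric assumption on $\boldsymbol\sigma$, the bilinear form collapses to
\[
\hat x^\top\widehat{\boldsymbol\sigma}(\gamma)\hat y = \frac{|\gamma|^2}{4k_p^2}\,\hat\gamma^\top\widehat{\boldsymbol\sigma}(\gamma)\hat\gamma - \tau^\top\widehat{\boldsymbol\sigma}(\gamma)\tau,
\]
where $\hat\gamma=\gamma/|\gamma|$. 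Thus each admissible $(\hat x,\hat y)$ gives access only to a linear combination of $\hat\gamma^\top\widehat{\boldsymbol\sigma}(\gamma)\hat\gamma$ and a quadratic form $\tau^\top\widehat{\boldsymbol\sigma}(\gamma)\tau$ with $\tau$ confined to $\hat\gamma^\perp$.

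Third, for each coordinate direction $e_i$, $i=1,\ldots,d$, I would make two specific choices of $\tau$, say $\tau_i^{\pm}$, proportional to $\pm(e_i-\hat\gamma_i\hat\gamma)$ and normalized so that $|\tau_i^\pm|^2=1-|\gamma|^2/(4k_p^2)$. These choices, combined with the symmetry identity above, are designed so that a suitable linear combination of the resulting bilinear evaluations reconstructs each diagonal Fourier coefficient $\widehat{\sigma_{ii}}(\gamma)=e_i^\top\widehat{\boldsymbol\sigma}(\gamma)e_i$ with the inaccessible $\hat\gamma^\top\widehat{\boldsymbol\sigma}(\gamma)\hat\gamma$ contribution being absorbed. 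Summing over $i$ and invoking $|\widehat{\Tr\boldsymbol\sigma}(\gamma)|\le\sum_{i=1}^d|\widehat{\sigma_{ii}}(\gamma)|$ then produces the overall prefactor $2d$, matching the claimed bound.

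The main obstacle is exactly the one flagged in the paragraph preceding the statement: since $F_p$ gives only a scalar bilinear form and the constraint $\hat x+\hat y=\gamma/k_p$ forces $\tau$ to lie in the hyperplane $\hat\gamma^\perp$, the parallel component $\hat\gamma^\top\widehat{\boldsymbol\sigma}(\gamma)\hat\gamma$ is not probed by any single admissible pair. Managing this inaccessible component and the cross terms produced in the decomposition of each $\widehat{\sigma_{ii}}(\gamma)$ is the technical crux; it is the reason both the strengthened hypothesis $|\gamma|\le k_p$ (which keeps $|\tau|^2\ge 3/4$ uniformly and guarantees enough room inside $\hat\gamma^\perp$) and the symmetry of $\boldsymbol\sigma$ are required.
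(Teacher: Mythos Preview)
Your proposal has a genuine gap: it uses only the compressional correlation $F_p$, whereas the shear correlation $F_s$ is indispensable. Your own step~2 computation shows that every admissible pair $(\hat x,\hat y)$ with $k_p(\hat x+\hat y)=\gamma$ produces the value
\[
\hat x^\top\widehat{\boldsymbol\sigma}(\gamma)\hat y=\theta_p^2\,\hat\gamma^\top\widehat{\boldsymbol\sigma}(\gamma)\hat\gamma-\tau^\top\widehat{\boldsymbol\sigma}(\gamma)\tau,\qquad \theta_p:=\frac{|\gamma|}{2k_p},
\]
with $\tau$ ranging over the sphere of radius $(1-\theta_p^2)^{1/2}$ in $\hat\gamma^\perp$. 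Your proposed ``two choices'' $\tau_i^{\pm}=\pm|\tau|\,\dfrac{e_i-\hat\gamma_i\hat\gamma}{|e_i-\hat\gamma_i\hat\gamma|}$ give the \emph{same} equation, since the quadratic form $\tau\mapsto\tau^\top\widehat{\boldsymbol\sigma}(\gamma)\tau$ is even. Hence, in a rotated basis $\{\hat\gamma,\nu_1,\dots,\nu_{d-1}\}$ with $b_{ij}$ the entries of $\boldsymbol U^\top\widehat{\boldsymbol\sigma}(\gamma)\boldsymbol U$, the compressional data only yield the $d-1$ numbers $\theta_p^2 b_{11}-(1-\theta_p^2)b_{jj}$ for $j=2,\dots,d$. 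No linear combination of these recovers $\operatorname{tr}\widehat{\boldsymbol\sigma}(\gamma)=\sum_{j=1}^d b_{jj}$: for instance in $d=2$ the single relation $\theta_p^2 b_{11}-(1-\theta_p^2)b_{22}$ can vanish while $b_{11}+b_{22}$ is arbitrarily large. The ``inaccessible contribution'' therefore cannot be absorbed as you suggest.

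The paper closes this gap by bringing in the shear far-field pattern. From the correlation $\E[\boldsymbol u_s^\infty(\hat x_s,k)\boldsymbol u_s^\infty(\hat y_s,k)^\top]$ with $k_s(\hat x_s+\hat y_s)=\gamma$, one contracts not with $\hat x_s,\hat y_s$ but with unit vectors $\hat\rho_1\perp\hat x_s$ and $\hat\rho_2\perp\hat y_s$ (which survive the projector $\boldsymbol I-\hat x_s\hat x_s^\top$), obtaining $\hat\rho_1^\top\widehat{\boldsymbol\sigma}(\gamma)\hat\rho_2=(1-\theta_s^2)b_{11}-\theta_s^2 b_{22}$ with $\theta_s=|\gamma|/(2k_s)$. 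Since $\theta_p\neq\theta_s$, the resulting $2\times 2$ system for $(b_{11},b_{22})$ has determinant $1-\theta_p^2-\theta_s^2>\tfrac12$ under the hypothesis $|\gamma|\le k_p$, and can be inverted with a uniform bound; this is where the factor $2$ in the statement originates. In $d=3$ the remaining entry $b_{33}$ is read off directly from the shear correlation by contracting with $\nu_2\perp\mathrm{span}\{\gamma,\nu_1\}$. Your plan needs to incorporate the shear data in this way; the compressional data alone are provably insufficient.
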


\begin{proof}
The proof is divided into two parts, each examining the correlation data associated with $\boldsymbol{u}_p^\infty$ and $\boldsymbol{u}_s^\infty$, respectively.

First we consider the correlation data of the compressional far-field patterns. For any $\gamma\in\mathbb R^d$ with $|\gamma|\le k_p$, there exist unit vectors $\nu_1,\cdots,\nu_{d-1}\in\mathbb S^{d-1}$ such that $\gamma\cdot\nu_i=\nu_i\cdot\nu_j=0$ for $i=1,\cdots,d-1,$ and $i\neq j$. Define two unit vectors 
\[
\hat{x}_p := \frac{\gamma + (4k_p^2-|\gamma|^2)^{1/2}\nu_1}{2k_p}\in\mathbb S^{d-1},\quad \hat{y}_p: = \frac{\gamma - (4k_p^2-|\gamma|^2)^{1/2}\nu_1}{2k_p}\in\mathbb S^{d-1}
\]
such that $\gamma=k_p(\hat x_p+\hat y_p)$. It follows from \eqref{eq:u_p} that
\begin{align*}
\hat x_p^\top\mathbb E\left[\m{u}^\infty_p(\hat{x}_p,k) \m{u}^\infty_p(\hat{y}_p,k)^\top\right]\hat y_p
&=\Cd^2c_p^{d+2-m}k^{d-3-m}\hat x^\top_p\widehat{\m\sigma}(\gamma)\hat y_p\\
&\quad +\Cd^2c_p^{d+2}k^{d-3}\hat x^\top_p\left[\int_{\mathbb R^d}e^{-{\rm i}\gamma\cdot z_1}{\boldsymbol r}(z_1,-k_p\hat y_p)dz_1\right]\hat y_p,
\end{align*}
which, together with  \eqref{eq:B}, implies 
\begin{align}\label{eq:sigma_p1}
\left|\hat x^\top_p\widehat{\m\sigma}(\gamma)\hat y_p\right|
&\le \frac{c_p^{m-d-2}}{\Cd^2}k^{m-d+3}\left|\hat x_p^\top\mathbb E\left[\m{u}^\infty_p(\hat{x}_p,k) \m{u}^\infty_p(\hat{y}_p,k)^\top\right]\hat y_p\right|\notag\\
&\quad +c_p^mk^m\left|\hat x_p^\top\left[\int_{\mathbb R^d}e^{-{\rm i}\gamma\cdot z_1}{\boldsymbol r}(z_1,-k_p\hat y_p)dz_1\right]\hat y_p\right|\notag\\
&\le \frac{c_p^{m-d-2}}{\Cd^2}k^{m-d+3}\left\|\mathbb E\left[\m{u}^\infty_p(\hat{x}_p,k) \m{u}^\infty_p(\hat{y}_p,k)^\top\right]\right\|_F+c_p^mk^m\left\|\int_{\mathbb R^d}e^{-{\rm i}\gamma\cdot z_1}{\boldsymbol r}(z_1,-k_p\hat y_p)dz_1\right\|_F\notag\\
&\le\frac{c_p^{m-d-2}}{\Cd^2}k^{m-d+3}M_{\rm EL}(k)+C_2 c_p^{-1}k^{-1}. 
\end{align}

When $\gamma=0$, $\nu_1$ can be chosen as any unit vector, and we have $\hat{x}_p=\nu_1$ and $\hat{y}_p=-\nu_1$. Then by choosing $\nu_1$ as the canonical basis vectors $e_i$, $i=1,\cdots,d$, the above estimate yields
\begin{align*}
\left|\widehat{\Tr{\m\sigma}}(0)\right|&\le \sum_{i=1}^d|\widehat{\sigma_{ii}}(0)|=\sum_{i=1}^d\left|e_i^\top\widehat{\m\sigma}(0)e_i\right|\\
&\le\frac{dc_p^{m-d-2}}{\Cd^2}k^{m-d+3}M_{\rm EL}(k)+dC_2 c_p^{-1}k^{-1},
\end{align*}
which satisfies the result. 

It suffices to consider the estimate for $\gamma\neq0$.
For simplicity, we denote $\theta_p:=\frac{|\gamma|}{2k_p}\in(0,\frac12]$, the unitary matrix $\boldsymbol U:=\left[\begin{matrix}\frac{\gamma}{|\gamma|}&\nu_1&\cdots&\nu_{d-1}\end{matrix}\right]_{d\times d}$, and $\boldsymbol B=[b_{ij}]_{d\times d}:=\boldsymbol U^\top\widehat{\m\sigma}(\gamma)\boldsymbol U$. Noting that the matrix $\boldsymbol B$ is symmetric due to the symmetry of $\m\sigma$, and
\[
\hat x_p=\boldsymbol U(\theta_p e_1+(1-\theta_p^2)^{1/2}e_2 ),\quad
\hat y_p=\boldsymbol U(\theta_p e_1-(1-\theta_p^2)^{1/2}e_2),
\]
we obtain 
\begin{eqnarray}\label{eq:sigma_p2}
\hat x^\top_p\widehat{\m\sigma}(\gamma)\hat y_p
=(\theta_p e_1^\top+(1-\theta_p^2)^{1/2}e_2^\top )\boldsymbol B(\theta_p e_1+(1-\theta_p^2)^{1/2}e_2 )=\theta_p^2b_{11}-(1-\theta_p^2)b_{22}.
\end{eqnarray}

Next we consider the correlation data of the shear far-field patterns. For the given $\gamma\neq0$, it holds that $|\gamma|\le k_p< k_s$ according to \eqref{eq:kps}. Therefore, there also exist
\[
\hat{x}_s := \frac{\gamma + (4k_s^2-|\gamma|^2)^{1/2}\nu_1}{2k_s}\in\mathbb S^{d-1},\quad \hat{y}_s: = \frac{\gamma - (4k_s^2-|\gamma|^2)^{1/2}\nu_1}{2k_s}\in\mathbb S^{d-1}
\]
such that $\gamma=k_s(\hat x_s+\hat y_s)$. We define two additional unit vectors 
\[
\hat\rho_1 := \frac{(4k_s^2-|\gamma|^2)^{1/2}\gamma-|\gamma|^2\nu_1}{2k_s|\gamma|}\in\mathbb S^{d-1}, \quad \hat\rho_2 :=  \frac{(4k_s^2-|\gamma|^2)^{1/2}\gamma+|\gamma|^2\nu_1}{2k_s|\gamma|}\in\mathbb S^{d-1},
\]
which satisfy $\hat x\cdot\hat\rho_1=\hat y\cdot\hat\rho_2=0$. Similar to \eqref{eq:u_p}, we consider the correlation data for $\m u_s^{\infty}$ and obtain 
\begin{align}\label{eq:u_s}
&\mathbb E\left[\m{u}^\infty_s(\hat{x}_s,k) \m{u}^\infty_s(\hat{y}_s,k)^\top\right]\notag\\
&=\Cd^2c_s^{d+2}k^{d-3}\left(\m I-\hat x_s\hat x_s^\top\right)\left[\int_{\mathbb R^d}\int_{\mathbb R^d}e^{-{\rm i}k_s\hat x_s\cdot z_1-{\rm i}k_s\hat y_s\cdot z_2}\mathbb E[\m f(z_1)\m f(z_2)]dz_1dz_2\right]\left(\m I-\hat y_s\hat y_s^\top\right)\notag\\
&=\Cd^2c_s^{d+2-m}k^{d-3-m}\left(\m I-\hat x_s\hat x_s^\top\right)\left[\int_{\mathbb R^d}e^{-{\rm i}\gamma\cdot z_1}\m\sigma(z_1)dz_1\right]\left(\m I-\hat y_s\hat y_s^\top\right)\notag\\
&\quad +\Cd^2c_s^{d+2}k^{d-3}\left(\m I-\hat x_s\hat x_s^\top\right)\left[\int_{\mathbb R^d}e^{-{\rm i}\gamma\cdot z_1}{\boldsymbol r}(z_1,-k_s\hat y_s)dz_1\right]\left(\m I-\hat y_s\hat y_s^\top\right).
\end{align}
Multiplying the above equation by $\hat\rho_1^\top$ on the left and by $\hat\rho_2$ on the right, we have 
\begin{align}\label{eq:sigma_s1}
\left|\hat\rho_1^\top\widehat{\m\sigma}(\gamma)\hat\rho_2\right|
&\le \frac{c_s^{m-d-2}}{\Cd^2}k^{m-d+3}\left|\hat\rho_1^\top\mathbb E\left[\m{u}^\infty_s(\hat{x}_s,k) \m{u}^\infty_s(\hat{y}_s,k)^\top\right]\hat\rho_2\right|\notag\\
&\quad +c_s^mk^m\left|\hat\rho_1^\top\left[\int_{\mathbb R^d}e^{-{\rm i}\gamma\cdot z_1}{\boldsymbol r}(z_1,-k_s\hat y_s)dz_1\right]\hat\rho_2\right|\notag\\
&\le \frac{c_s^{m-d-2}}{\Cd^2}k^{m-d+3}M_{\rm EL}(k)+C_2 c_s^{-1}k^{-1}.
\end{align}

Similarly, we denote $\theta_s:=\frac{|\gamma|}{2k_s}\in(0,\frac{c_p}{2c_s}]$ with $\frac{c_p}{2c_s}<\frac12$ and get
\begin{eqnarray}\label{eq:sigma_s2}
\hat\rho_1^\top\widehat{\m\sigma}(\gamma)\hat\rho_2
=(1-\theta_s^2)b_{11}-\theta_s^2b_{22}.
\end{eqnarray}
Combining \eqref{eq:sigma_p2} and \eqref{eq:sigma_s2} gives
\[
\left[\begin{matrix}
\theta_p^2&-(1-\theta_p^2)\\[5pt]
1-\theta_s^2&-\theta_s^2
\end{matrix}\right]
\left[\begin{matrix}
b_{11}\\[5pt]
b_{22}
\end{matrix}\right]
=\left[\begin{matrix}
\hat x^\top_p\widehat{\m\sigma}(\gamma)\hat y_p\\[5pt]
\hat\rho_1^\top\widehat{\m\sigma}(\gamma)\hat\rho_2
\end{matrix}\right],
\]
where the coefficient matrix
\[
\Theta:=\left[\begin{matrix}
\theta_p^2&-(1-\theta_p^2)\\
1-\theta_s^2&-\theta_s^2
\end{matrix}\right]
\]
is invertible, satisfying 
\[
\det\Theta=1-\theta_p^2-\theta_s^2 \geq 1-\left(\frac12\right)^2-\left(\frac{c_p}{2c_s}\right)^2=\frac34-\frac{\mu}{4(\lambda+2\mu)}>\frac12.
\]
Hence,
\begin{eqnarray*}
\left[\begin{matrix}
b_{11}\\[5pt]
b_{22}
\end{matrix}\right]
=\Theta^{-1}\left[\begin{matrix}
\hat x^\top_p\widehat{\m\sigma}(\gamma)\hat y_p\\[5pt]
\hat\rho_1^\top\widehat{\m\sigma}(\gamma)\hat\rho_2
\end{matrix}\right]=\frac1{\det\Theta}
\left[\begin{matrix}
-\theta_s^2&-(1-\theta_s^2)\\[5pt]
1-\theta_p^2&\theta_p^2
\end{matrix}\right]
\left[\begin{matrix}
\hat x^\top_p\widehat{\m\sigma}(\gamma)\hat y_p\\[5pt]
\hat\rho_1^\top\widehat{\m\sigma}(\gamma)\hat\rho_2
\end{matrix}\right],
\end{eqnarray*}
which, together with \eqref{eq:sigma_p1}, \eqref{eq:sigma_s1}, and the fact  that $c_s>c_p$, leads to
\begin{align}\label{eq:b12}
\max_{i=1,2}\{|b_{ii}|\}
&\le \frac1{\det\Theta}\max\left\{|\hat x^\top_p\widehat{\m\sigma}(\gamma)\hat y_p|,\,|\hat\rho_1^\top\widehat{\m\sigma}(\gamma)\hat\rho_2|\right\}\notag\\
&\le 2\left[\frac{c_p^{m-d-2}}{\Cd^2}k^{m-d+3}M_{\rm EL}(k)+C_2 c_p^{-1}k^{-1}\right]. 
\end{align}

For $d=2$, we conclude from \eqref{eq:b12} that
\begin{align*}
\left|\widehat{\Tr{\m\sigma}}(\gamma)\right|&=\left|\Tr[\widehat{\m\sigma}(\gamma)]\right|=|\Tr \boldsymbol B|\le d\max_{i=1,2}\{|b_{ii}|\}\\
&\le \frac{2dc_p^{m-4}}{\beta_2^2}k^{m+1}M_{\rm EL}(k)+2dC_2 c_p^{-1}k^{-1},\quad |\gamma|\le k_p.
\end{align*}
For $d=3$, we need to estimate $b_{33}$. Multiplying \eqref{eq:u_s} by $\nu_2^\top$ on the left and by $\nu_2$ on the right, we get
\begin{align*}
\left|\nu_2^\top\widehat{\m\sigma}(\gamma)\nu_2\right|
&=\left|e_3^\top \boldsymbol B e_3\right|=|b_{33}|\notag\\
&\le \frac{c_s^{m-d-2}}{\Cd^2}k^{m-d+3}\left|\nu_2^\top\mathbb E\left[\m{u}^\infty_s(\hat{x}_s,k) \m{u}^\infty_s(\hat{y}_s,k)^\top\right]\nu_2\right|\\
&\quad +c_s^mk^m\left|\nu_2^\top\left[\int_{\mathbb R^d}e^{-{\rm i}\gamma\cdot z_1}{\boldsymbol r}(z_1,-k_s\hat y_s)dz_1\right]\nu_2\right|\\
&\le\frac{c_s^{m-d-2}}{\Cd^2}k^{m-d+3}M_{\rm EL}(k)+C_2 c_s^{-1}k^{-1},
\end{align*}
which, together with \eqref{eq:b12}, yields
\begin{align*}
\left|\widehat{\Tr{\m\sigma}}(\gamma)\right|&=|\Tr \boldsymbol B|\le d\max_{i=1,2,3}\{|b_{ii}|\}\\
&\le \frac{2dc_p^{m-4}}{\beta_2^2}k^{m+1}M_{\rm EL}(k)+2dC_2 c_p^{-1}k^{-1},\quad|\gamma|\le k_p,
\end{align*}
thus completing the proof. 
\end{proof}

Following a similar argument as the proof of Theorem \ref{stab_poly}, we derive the following stability estimate for $\Tr(\m{\sigma})$.

\begin{theorem}
Let $\m f$ satisfy Assumption \ref{assump2} with $d\in\{2,3\}$ and additionally $s>d$. Assume also that $\m{\sigma}$ is a symmetric matrix. There exists a constant $k_0>1$ such that for any $k>k_0$, the following estimate holds:
\[
\left\|\Tr{\m\sigma}\right\|_{L^\infty(B_1)}\lesssim k^{\frac ds+m-d+3}\left(1+\|\Tr{\m\sigma}\|_{H^s(B_1)}\right)M_{\rm EL}(k).
\]
\end{theorem}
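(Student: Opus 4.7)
The plan is to mirror the structure of the proof of Theorem \ref{stab_poly}, applied now to the scalar quantity $\Tr\m\sigma$. Since $\m\sigma$ is a symmetric matrix with entries in $H_0^s(B_1)$ (per Assumption \ref{assump2}), the trace $\Tr\m\sigma$ also lies in $H_0^s(B_1)$, so Fourier inversion on $B_1$ combined with the Paley--Wiener--Schwartz theorem is available. First I would write
\[
\|\Tr\m\sigma\|_{L^\infty(B_1)}\le\frac1{(2\pi)^d}\int_{|\gamma|\le\rho}|\widehat{\Tr\m\sigma}(\gamma)|d\gamma+\frac1{(2\pi)^d}\int_{|\gamma|>\rho}|\widehat{\Tr\m\sigma}(\gamma)|d\gamma=:I_1(\rho)+I_2(\rho),
\]
with a cutoff $\rho$ to be chosen later, constrained by $\rho\le k_p=c_pk$ so that Lemma \ref{lm:Fourier_EL} applies.

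For the low-frequency part, I would insert the pointwise bound from Lemma \ref{lm:Fourier_EL} and integrate over the ball $|\gamma|\le\rho$ of volume $\lambda(B_\rho)\lesssim\rho^d$, obtaining
\[
I_1(\rho)\lesssim\rho^d\bigl[k^{m-d+3}M_{\rm EL}(k)+k^{-1}\bigr].
\]
For the high-frequency part, since $s>d$, the Paley--Wiener--Schwartz decay $|\widehat{\Tr\m\sigma}(\gamma)|\lesssim(1+|\gamma|)^{-s}\|\Tr\m\sigma\|_{H^s(B_1)}$ yields, by the same computation as in Theorem \ref{stab_poly},
\[
I_2(\rho)\lesssim\|\Tr\m\sigma\|_{H^s(B_1)}\rho^{d-s}.
\]
Combining these and choosing $\rho=k^{1/s}$ (which satisfies $\rho\le k_p$ once $k$ is sufficiently large, since $k^{1/s-1}\to 0$) produces an estimate of the form
\[
\|\Tr\m\sigma\|_{L^\infty(B_1)}\lesssim k^{d/s}\bigl[k^{m-d+3}M_{\rm EL}(k)+k^{-1}+\|\Tr\m\sigma\|_{H^s(B_1)}k^{-s/d\cdot d/s}\cdot k^{-1+d/s-d/s}\bigr],
\]
up to rearrangement of exponents analogous to \eqref{eq:sigma_PL}.

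The final step, which is the main technical obstacle, is to absorb the residual term $k^{-1}$ and the high-frequency term into $M_{\rm EL}(k)$. This is where Lemma \ref{lm:M_EL} enters: provided $\|\Tr\m\sigma\|_{L^1(B_1)}>0$ (which is ensured since $\m\sigma$ is non-negative definite and nontrivial, making each $\sigma_{ii}\ge0$ with at least one nonzero), one chooses $k_0>1$ large enough that the residual $C_2\beta_d^2 c_p^{d+1-m}k^{d-4-m}$ in Lemma \ref{lm:M_EL} is dominated by half of the main term, yielding the lower bound
\[
M_{\rm EL}(k)\gtrsim k^{d-3-m}\|\Tr\m\sigma\|_{L^1(B_1)}\quad\text{for all }k>k_0.
\]
This permits the substitution $k^{-1}\cdot k^{d-3-m}\lesssim k^{-1}\cdot M_{\rm EL}(k)/\|\Tr\m\sigma\|_{L^1(B_1)}$, so every residual term after the $\rho=k^{1/s}$ balancing is bounded by a multiple of $(1+\|\Tr\m\sigma\|_{H^s(B_1)})M_{\rm EL}(k)$ scaled by $k^{d/s+m-d+3}$. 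Collecting constants gives the claimed bound. The subtlety to watch is that Lemma \ref{lm:Fourier_EL} is only valid for $|\gamma|\le k_p$ (not $2k$ as in the polyharmonic and electromagnetic cases), which is why one must verify $k^{1/s}\le c_pk$ for $k>k_0$; this is automatic since $1/s<1$, at the expense of possibly enlarging $k_0$ to exceed $c_p^{-s/(s-1)}$.
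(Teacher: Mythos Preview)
Your proposal is correct and follows essentially the same route as the paper: split via Fourier inversion into low- and high-frequency parts, estimate $I_1$ by Lemma~\ref{lm:Fourier_EL} and $I_2$ by Paley--Wiener--Schwartz, choose the cutoff as a $1/s$-power of the wavenumber, and then absorb the residual $k^{-1}$ terms using the lower bound from Lemma~\ref{lm:M_EL}. The only cosmetic difference is that the paper sets $\rho = k_p^{1/s}$ (so that $\rho < k_p$ is automatic once $k_p>1$), whereas you take $\rho = k^{1/s}$ and compensate by enlarging $k_0$ to force $k^{1/s}\le c_p k$; since $k_p = c_p k$ with $c_p$ fixed, the two choices differ only by absorbed constants.
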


\begin{proof}
Note that
\begin{align*}
\Vert \Tr\m{\sigma}\Vert_{L^\infty(B_1)} 
&=\sup_{x\in B_1}\left|\frac1{(2\pi)^d}\int_{\mathbb R^d}\widehat{\Tr\m\sigma}(\gamma)e^{-{\rm i}x\cdot\gamma}d\gamma\right|\\
&\le \frac1{(2\pi)^d}\int_{|\gamma|\leq \rho}\left|\widehat{\Tr\m{\sigma}}(\gamma)\right|d\gamma+ \frac1{(2\pi)^d}\int_{|\gamma|>\rho} \left|\widehat{\Tr\m{\sigma}}(\gamma)\right|d\gamma\\ 
&=: I_1(\rho)+I_2(\rho),
\end{align*}
where
\[
I_1(\rho)\le\frac{\lambda(B_\rho)}{(2\pi)^d}\left[\frac{2dc_p^{m-d-2}}{\Cd^2}k^{m-d+3}M_{\rm EL}(k)+2dC_2 c_p^{-1}k^{-1}\right]
\]
for $\rho\le k_p$ according to Lemma \ref{lm:Fourier_EL}, and
\begin{eqnarray*}
I_2(\rho)
\lesssim\int_{|\gamma|>\rho}(1+|\gamma|)^{-s}d\gamma\|\Tr\m\sigma\|_{H^s(B_1)}
\lesssim\|\Tr\m\sigma\|_{H^s(B_1)}\rho^{d-s}
\end{eqnarray*}
for $s>d$ according to the Paley--Wiener--Schwartz theorem used in the proof of Theorem \ref{stab_poly}.
We then conclude
\begin{eqnarray}\label{eq:sigma_EL}
\|\Tr\m\sigma\|_{L^\infty(B_1)}\lesssim\rho^d\left[k^{m-d+3}M_{\rm EL}(k)+C_2 k^{-1}+\|\Tr\m\sigma\|_{H^s(B_1)}\rho^{-s}\right].
\end{eqnarray}

Let $k_0>\max
\Big\{\frac{2dC_2}{c_p\|\Tr\m\sigma\|_{L^1(B_1)}},1\Big\}\geq1$. For any $k>k_0$,
\[
k^{-1}<k_0^{-1}<\frac{c_p\|\Tr\m\sigma\|_{L^1(B_1)}}{2dC_2},
\]
which, together with  the lower bound of the data $M_{\rm EL}(k)$ in Lemma \ref{lm:M_EL}, leads to
\[
M_{\rm EL}(k)\ge\frac{\Cd^2c_p^{d+2-m}\|\Tr\m\sigma\|_{L^1(B_1)}}{2d}k^{d-3-m}.
\]
Thus we obtain that
\[
k^{-1}k^{d-3-m}<\frac{c_p\|\Tr\m\sigma\|_{L^1(B_1)}}{2dC_2}\frac{2dM_{\rm EL}(k)}{\Cd^2c_p^{d+2-m}\|\Tr\m\sigma\|_{L^1(B_1)}}=\frac{M_{\rm EL}(k)}{\Cd^2c_p^{d+1-m}C_2}.
\]
By choosing $\rho=k_p^{\frac1s}<k_p$, it follows from \eqref{eq:sigma_EL} that
\begin{align*}
\Vert \m{\sigma}\Vert_{L^\infty(B_1;\mathbb R^{3\times3})} &\lesssim k^{\frac ds+m-d+3}\left[M_{\rm EL}(k)+C_2 k^{-1}k^{d-3-m}+\|\Tr\m\sigma\|_{H^s(B_1)}k^{-1}k^{d-3-m}\right]\\
&\lesssim k^{\frac ds+m-d+3}\left(1+\|\Tr\m\sigma\|_{H^s(B_1)}\right)M_{\rm EL}(k),
\end{align*}
which completes the proof.
\end{proof}

\section{Conclusion}\label{con}

In this paper, we have examined the stability of inverse random source problems concerning polyharmonic, electromagnetic, and elastic waves within a unified framework. The source is characterized as a real-valued centered GMIG random field, where the covariance operator is represented as a classical pseudo-differential operator. The objective of the inverse source problem is to identify the strength function corresponding to the principal symbol of the covariance operator by employing the correlation of the far-field patterns. We establish Lipschitz stability across all cases using single-frequency data, illustrating that the inverse random source problems are stable when utilizing correlation-based data.

A possible extension of this research involves exploring the stochastic wave equation in a random medium, where the medium coefficient is modeled as a GMIG random field. The challenges arise from the presence of nonlinearity and multiplicative noise associated with the inverse random medium problem. Our objective is to advance this line of research and provide deeper insights into these more intricate problems in future studies.

\section*{Appendix}

\begin{proof}[Proof of Lemma \ref{lm:H}]
For any smooth test functions $\phi\in C_0^\infty(D)$ and $\psi\in C_0^\infty(B)$, we retain the notation of $\phi$ and $\psi$ as their respective zero extensions outside of $D$ and $B$. It follows from the Parseval--Plancherel identity that
\begin{align*}
\left\langle\mathcal H_k\phi,\psi\right\rangle &=\int_{\mathbb R^d}\frac1{|\xi|^{2n}-k^{2n}}\widehat\phi(\xi)\widehat\psi(\xi)d\xi\notag\\
&=\int_{\mathbb R^d}\frac{(1+|\xi|^2)^{\frac s2}}{|\xi|^{2n}-k^{2n}}\widehat{\mathcal J^{-s_1}\phi}(\xi)\widehat{\mathcal J^{-s_2}\psi}(\xi)d\xi,
\end{align*}
where 
\[
\left(\mathcal J^{-s_1}\phi\right)(x):=\mathcal F^{-1}\left[(1+|\cdot|^2)^{-\frac{s_1}2}\widehat\phi\right](x)
\]
denotes the Bessel potential of order $-s_1$, with $\mathcal F^{-1}$ representing the inverse Fourier transform. Divide the entire domain $\mathbb R^d$ into two subdomains
\[
\Omega_1:=\left\{\xi\in\mathbb R^d:||\xi|-k|>\frac k2\right\},\quad \Omega_2:=\left\{\xi\in\mathbb R^d:||\xi|-k|<\frac k2\right\}
\]
such that
\begin{align*}
\left\langle\mathcal H_k\phi,\psi\right\rangle& =\int_{\Omega_1}\frac{(1+|\xi|^2)^{\frac s2}}{|\xi|^{2n}-k^{2n}}\widehat{\mathcal J^{-s_1}\phi}(\xi)\widehat{\mathcal J^{-s_2}\psi}(\xi)d\xi+\int_{\Omega_2}\frac{(1+|\xi|^2)^{\frac s2}}{|\xi|^{2n}-k^{2n}}\widehat{\mathcal J^{-s_1}\phi}(\xi)\widehat{\mathcal J^{-s_2}\psi}(\xi)d\xi\\
&=:I+K
\end{align*} 
and only the second term $K$ is a singular integral. 

The first term $I$ can be estimated easily
\begin{align*}
I &=\int_{\Omega_1}\frac{(1+|\xi|^2)^{\frac s2}}{(|\xi|-k)(|\xi|+k)\left(\sum_{j=0}^{n-1}|\xi|^{2j}k^{2(n-1-j)}\right)}\widehat{\mathcal J^{-s_1}\phi}(\xi)\widehat{\mathcal J^{-s_2}\psi}(\xi)d\xi\\
&\lesssim \frac1k\int_{\{|\xi|>\frac{3k}2\}\cup\{|\xi|<\frac k2\}}\frac{(1+|\xi|^2)^{\frac s2}}{(|\xi|+k)\left(\sum_{j=0}^{n-1}|\xi|^{2j}k^{2(n-1-j)}\right)}\left|\widehat{\mathcal J^{-s_1}\phi}\widehat{\mathcal J^{-s_2}\psi}\right|d\xi\\
&\lesssim \frac1k\int_{\{|\xi|>\frac{3k}2\}}\frac1{|\xi|^{2n-1-s}}\left|\widehat{\mathcal J^{-s_1}\phi}\widehat{\mathcal J^{-s_2}\psi}\right|d\xi
+\frac1k\int_{\{|\xi|<\frac k2\}}\frac{1+k^s}{k^{2n-1}}\left|\widehat{\mathcal J^{-s_1}\phi}\widehat{\mathcal J^{-s_2}\psi}\right|d\xi\\
&\lesssim k^{s-2n}\|\phi\|_{H^{-s_1}(D)}\|\psi\|_{H^{-s_2}(B)}.
\end{align*}

For the singular integral $K$, we adopt the coordinate transformation $\mathcal T:\xi\mapsto\xi^*:=\left(\frac{2k}{|\xi|}-1\right)\xi$ in $\Omega_{2}$, where $|\xi^*|=2k-|\xi|$. This transformation maps the subdomain 
\[
\Omega_{21}:=\left\{\xi\in\Omega_2:\frac k2<|\xi|<k\right\}
\]
to the subdomain
\[
\Omega_{22}:=\left\{\xi\in\Omega_2:k<|\xi|<\frac{3k}2\right\}
\]
with the Jacobian $J_d(\xi)=\left(\frac{2k}{|\xi|}-1\right)^{d-1}$. Following a procedure similar to that described in \cite[Lemma 2.2]{LW24}, we decompose the term $K$ into four parts:
\begin{align*}
K &=\int_{\Omega_{21}}\frac{(1+|\xi|^2)^{\frac s2}}{|\xi|^{2n}-k^{2n}}\widehat{\mathcal J^{-s_1}\phi}(\xi)\widehat{\mathcal J^{-s_2}\psi}(\xi)d\xi+\int_{\Omega_{22}}\frac{(1+|\xi^*|^2)^{\frac s2}}{|\xi^*|^{2n}-k^{2n}}\widehat{\mathcal J^{-s_1}\phi}(\xi^*)\widehat{\mathcal J^{-s_2}\psi}(\xi^*)d\xi^*\\
&=\int_{\Omega_{21}}\frac{(1+|\xi|^2)^{\frac s2}}{|\xi|^{2n}-k^{2n}}\widehat{\mathcal J^{-s_1}\phi}(\xi)\widehat{\mathcal J^{-s_2}\psi}(\xi)d\xi+\int_{\Omega_{21}}\frac{(1+|\xi^*|^2)^{\frac s2}}{|\xi^*|^{2n}-k^{2n}}\widehat{\mathcal J^{-s_1}\phi}(\xi^*)\widehat{\mathcal J^{-s_2}\psi}(\xi^*)J_d(\xi)d\xi\\
&=\int_{\Omega_{21}}\left[\frac{1}{|\xi|^{2n}-k^{2n}}+\frac{J_d(\xi)}{|\xi^*|^{2n}-k^{2n}}\right](1+|\xi|^2)^{\frac s2}\widehat{\mathcal J^{-s_1}\phi}(\xi)\widehat{\mathcal J^{-s_2}\psi}(\xi)d\xi\\
&\quad +\int_{\Omega_{21}}\frac{J_d(\xi)}{|\xi^*|^{2n}-k^{2n}}\left[(1+|\xi^*|^2)^{\frac s2}-(1+|\xi|^2)^{\frac s2}\right]\widehat{\mathcal J^{-s_1}\phi}(\xi)\widehat{\mathcal J^{-s_2}\psi}(\xi)d\xi\\
&\quad +\int_{\Omega_{21}}\frac{J_d(\xi)}{|\xi^*|^{2n}-k^{2n}}(1+|\xi^*|^2)^{\frac s2}\left[\widehat{\mathcal J^{-s_1}\phi}(\xi^*)-\widehat{\mathcal J^{-s_1}\phi}(\xi)\right]\widehat{\mathcal J^{-s_2}\psi}(\xi)d\xi\\
&\quad +\int_{\Omega_{21}}\frac{J_d(\xi)}{|\xi^*|^{2n}-k^{2n}}(1+|\xi^*|^2)^{\frac s2}\widehat{\mathcal J^{-s_1}\phi}(\xi^*)\left[\widehat{\mathcal J^{-s_2}\psi}(\xi^*)-\widehat{\mathcal J^{-s_2}\psi}(\xi)\right]d\xi\\
&=:K_1+K_2+K_3+K_4.
\end{align*}

Define the function
\begin{align*}
F(\xi):=\frac{1}{|\xi|^{2n}-k^{2n}}+\frac{J_d(\xi)}{|\xi^*|^{2n}-k^{2n}},\quad \xi\in\Omega_{21}.
\end{align*}
For $d=2$, it holds uniformly for $\xi\in\Omega_{21}$ that
\begin{align*}
F(\xi)&=\frac1{(|\xi|-k)(|\xi|+k)\left(\sum_{j=0}^{n-1}|\xi|^{2j}k^{2(n-1-j)}\right)}\\
&\quad +\frac{2k-|\xi|}{|\xi|(k-|\xi|)(3k-|\xi|)\left(\sum_{j=0}^{n-1}(2k-|\xi|)^{2j}k^{2(n-1-j)}\right)}\\
&=\frac{|\xi|(|\xi|-3k)\left(\sum_{j=0}^{n-1}(2k-|\xi|)^{2j}k^{2(n-1-j)}\right)+(2k-|\xi|)(|\xi|+k)\left(\sum_{j=0}^{n-1}|\xi|^{2j}k^{2(n-1-j)}\right)}{|\xi|(|\xi|-k)(|\xi|+k)(|\xi|-3k)\left(\sum_{j=0}^{n-1}|\xi|^{2j}k^{2(n-1-j)}\right)\left(\sum_{j=0}^{n-1}(2k-|\xi|)^{2j}k^{2(n-1-j)}\right)}\\
&=\frac{\sum_{j=0}^{n-1}k^{2(n-1-j)}\left\{(|\xi|^2-3k|\xi|)\left[(2k-|\xi|)^{2j}-|\xi|^{2j}\right]+2k(k-|\xi|)|\xi|^{2j}\right\}}{|\xi|(|\xi|-k)(|\xi|+k)(|\xi|-3k)\left(\sum_{j=0}^{n-1}|\xi|^{2j}k^{2(n-1-j)}\right)\left(\sum_{j=0}^{n-1}(2k-|\xi|)^{2j}k^{2(n-1-j)}\right)}\\
&=-\frac{\sum_{j=0}^{n-1}k^{2(n-1-j)}\left\{(|\xi|^2-3k|\xi|)4k\left[\sum_{l=0}^{j-1}(2k-|\xi|)^{2l}|\xi|^{2(j-1-l)}\right]+2k|\xi|^{2j}\right\}}{|\xi|(|\xi|+k)(|\xi|-3k)\left(\sum_{j=0}^{n-1}|\xi|^{2j}k^{2(n-1-j)}\right)\left(\sum_{j=0}^{n-1}(2k-|\xi|)^{2j}k^{2(n-1-j)}\right)}\\
&\lesssim k^{-2n}.
\end{align*}
For $d=3$, it holds similarly that
\begin{align*}
F(\xi)&=\frac1{(|\xi|-k)(|\xi|+k)\left(\sum_{j=0}^{n-1}|\xi|^{2j}k^{2(n-1-j)}\right)}\\
&\quad +\frac{(2k-|\xi|)^2}{|\xi|^2(k-|\xi|)(3k-|\xi|)\left(\sum_{j=0}^{n-1}(2k-|\xi|)^{2j}k^{2(n-1-j)}\right)}\\
&=\frac{\sum_{j=0}^{n-1}k^{2(n-1-j)}\left\{(|\xi|^3-3k|\xi|^2)\left[(2k-|\xi|)^{2j}-|\xi|^{2j}\right]+2(|\xi|^2-2k|\xi|-2k^2)(|\xi|-k)|\xi|^{2j}\right\}}{|\xi|^2(|\xi|-k)(|\xi|+k)(|\xi|-3k)\left(\sum_{j=0}^{n-1}|\xi|^{2j}k^{2(n-1-j)}\right)\left(\sum_{j=0}^{n-1}(2k-|\xi|)^{2j}k^{2(n-1-j)}\right)}\\
&=\frac{\sum_{j=0}^{n-1}k^{2(n-1-j)}\left\{-(|\xi|^3-3k|\xi|^2)4k\left[\sum_{l=0}^{j-1}(2k-|\xi|)^{2l}|\xi|^{2(j-1-l)}\right]+2(|\xi|^2-2k|\xi|-2k^2)|\xi|^{2j}\right\}}{|\xi|^2(|\xi|+k)(|\xi|-3k)\left(\sum_{j=0}^{n-1}|\xi|^{2j}k^{2(n-1-j)}\right)\left(\sum_{j=0}^{n-1}(2k-|\xi|)^{2j}k^{2(n-1-j)}\right)}\\
&\lesssim k^{-2n}.
\end{align*}
Combining the above estimates leads to
\[
K_1\lesssim k^{s-2n}\|\phi\|_{H^{-s_1}(D)}\|\psi\|_{H^{-s_2}(B)}.
\]

It is evident that the term $K_2$ satisfies
\begin{align*}
K_2&=\int_{\Omega_{21}}\frac{J_d(\xi)}{|\xi^*|^{2n}-k^{2n}}\left[(1+|\xi^*|^2)^{\frac s2}-(1+|\xi|^2)^{\frac s2}\right]\widehat{\mathcal J^{-s_1}\phi}(\xi)\widehat{\mathcal J^{-s_2}\psi}(\xi)d\xi\\
&=\int_{\Omega_{21}}\frac{J_d(\xi)(|\xi^*|^2-|\xi|^2)}{|\xi^*|^{2n}-k^{2n}}\frac s2\left(1+\theta|\xi^*|^2+(1-\theta)|\xi|^2\right)^{\frac s2-1}\widehat{\mathcal J^{-s_1}\phi}(\xi)\widehat{\mathcal J^{-s_2}\psi}(\xi)d\xi\\
&=\int_{\Omega_{21}}\frac{s(2k-|\xi|)^{d-2}(|\xi^*|+|\xi|)}{|\xi|^{d-2}(|\xi^*|+k)\sum_{j=0}^{n-1}|\xi^*|^{2j}k^{2(n-1-j)}}\left(1+\theta|\xi^*|^2+(1-\theta)|\xi|^2\right)^{\frac s2-1}\widehat{\mathcal J^{-s_1}\phi}(\xi)\widehat{\mathcal J^{-s_2}\psi}(\xi)d\xi\\
&\lesssim k^{s-2n}\|\phi\|_{H^{-s_1}(D)}\|\psi\|_{H^{-s_2}(B)},
\end{align*}
where $\theta\in(0,1)$ is a constant.

For the estimates of $K_3$ and $K_4$, we note that $\widehat{\mathcal J^{-s_1}\phi},\widehat{\mathcal J^{-s_2}\psi}\in\mathcal S(\mathbb R^d)\subset H^1(\mathbb R^d)$ for $\phi,\psi\in C_0^{\infty}(\mathbb R^d)$. The characterization of $H^1(\mathbb R^d)$ given in \cite{H96b,P04} indicates that 
\[
\left|\widehat{\mathcal J^{-s_1}\phi}(\xi^*)-\widehat{\mathcal J^{-s_1}\phi}(\xi)\right|\lesssim|\xi^*-\xi|\left[M(|\nabla\widehat{\mathcal J^{-s_1}\phi}|)(\xi^*)+M(|\nabla\widehat{\mathcal J^{-s_1}\phi}|)(\xi)\right],
\]
where $M(\cdot)$ denotes the Hardy--Littlewood maximal function satisfying (cf. \cite{P04})
\[
\|M(|\nabla\widehat{\mathcal J^{-s_1}\phi}|)\|_{L^2(\mathbb R^d)}\lesssim\|\nabla\widehat{\mathcal J^{-s_1}\phi}\|_{L^2(\mathbb R^d)}\lesssim\|\phi\|_{H^{-s_1}(D)}.
\]
Similar results apply to $\widehat{\mathcal J^{-s_2}\psi}$. Using the estimate
\begin{align*}
\left|\frac{J_d(\xi)}{|\xi^*|^{2n}-k^{2n}}(1+|\xi^*|^2)^{\frac s2}|\xi^*-\xi|\right|=\frac{(2k-|\xi|)^{d-2}(1+|\xi^*|^2)^{\frac s2}2(k-|\xi|)}{|\xi|^{d-2}(k-|\xi|)(|\xi^*|+k)\sum_{j=0}^{n-1}|\xi^*|^{2j}k^{2(n-1-j)}}\lesssim k^{s-2n+1},
\end{align*}
we obtain 
\[
K_3+K_4\lesssim k^{s-2n+1}\|\phi\|_{H^{-s_1}(D)}\|\psi\|_{H^{-s_2}(B)}.
\]

We thus conclude that
\[
|\langle\mathcal H_k\phi,\psi\rangle|\lesssim k^{s-2n+1}\|\phi\|_{H^{-s_1}(D)}\|\psi\|_{H^{-s_2}(B)}\quad \forall~\phi\in C_0^\infty(D),~\psi\in C_0^\infty(B),
\]
which can be extended to $\phi\in H^{-s_1}(D)$ and $\psi\in H^{-s_2}(B)$ by a dense argument, completing the proof of \eqref{eq:H1}. 

The estimate in $L^\infty(B)$ can be derived following the same procedure as in \cite[Lemma 2.2]{LW24}. Indeed, utilizing the facts
\[
\widehat{\delta(x-\cdot)}(\xi)=e^{-{\rm i}x\cdot\xi}\widehat\delta(\xi)=e^{-{\rm i}x\cdot\xi},\quad \widehat{G(x,\cdot,k)}(\xi)=\frac{e^{-{\rm i}x\cdot\xi}}{|\xi|^{2n}-k^{2n}}
\]
and aforementioned estimates, we obtain 
\begin{align*}
\mathcal H_k\phi(x)&=\int_{\mathbb R^d}G(x,y,k)\phi(y)dy
=\int_{\mathbb R^d}(1+|\xi|^2)^{\frac s2}\widehat{G(x,\cdot,k)}(\xi)\widehat{\mathcal J^{-s}\phi}(\xi)d\xi\\
&=\int_{\mathbb R^d}\frac{(1+|\xi|^2)^{\frac{s+\frac{d+\epsilon}2}2}}{|\xi|^{2n}-k^{2n}}\widehat{\mathcal J^{-s}\phi}(\xi)\left(e^{-{\rm i}x\cdot\xi}(1+|\xi|^2)^{-\frac{d+\epsilon}4}\right)d\xi\\
&\lesssim k^{s-2n+1+\frac{d+\epsilon}2}\|\phi\|_{H^{-s}(D)}\|e^{-{\rm i}x\cdot(\cdot)}(1+|\cdot|^2)^{-\frac{d+\epsilon}4}\|_{H^1(\mathbb R^d)}\\
&\lesssim k^{s-2n+1+\frac{d+\epsilon}2}\|\phi\|_{H^{-s}(D)}
\end{align*}
uniformly for $x\in B$, thus completing the proof of \eqref{eq:H2}.
\end{proof}

\end{document}